
\documentclass[leqno,final]{siamltex}
\usepackage{amsmath,amsfonts}
\usepackage{graphicx}
\usepackage{float}
\usepackage{subcaption}
\usepackage{appendix}
\usepackage{cleveref}
\usepackage{epstopdf}

%\usepackage[justification=centering]{caption}
%\usepackage[margin=1in]{geometry}
%\usepackage[parfill]{parskip}
%\usepackage[none]{hyphenat}
%\usepackage[dvipsnames]{xcolor}
%\usepackage{cite}
%\usepackage[affil-it]{authblk}
%\usepackage{pdfpages}
%\usepackage{latexsym}
%\usepackage{enumitem}
%\usepackage{multirow}
%\usepackage{bookmark}
%\usepackage{array}
%\usepackage[ruled]{algorithm2e}
%\pagestyle{plain}
%\usepackage{url}
%\raggedbottom
%\usepackage{setspace}
%\let\cleardoublepage\clearpage
%\makeatletter
%\renewcommand*\env@matrix[1][*\c@MaxMatrixCols c]{%
%\hskip -\arraycolsep
%\let\@ifnextchar\new@ifnextchar
%\array{#1}}
%\makeatother

\newcommand{\R}{\mathbb{R}}
\newcommand{\C}{\mathbb{C}}
\newcommand{\II}{\mathcal{I}}
\newcommand{\Ha}{\mathcal{H}}

%\newcommand{\N}{\mathbb{N}}
%\newcommand{\Q}{\mathbb{Q}}
%\newcommand{\mathcal{K}}{\mathcal{K}}
%\newcommand{\Z}{\mathbb{Z}}
%\newcommand{\I}{\mathbb{I}}
%\newcommand{\F}{\mathcal{F}}
%\newcommand{\E}{\mathcal{E}}
%\newcommand{\D}{\mathcal{D}}
%\newcommand{\X}{\mathcal{X}}
%\newcommand{\La}{\mathcal{L}}
%\newcommand{\Sa}{\mathcal{L}}
%\linespread{1.15}

\usepackage{color}

%%%%% matrix construction %%%%%
%\newcommand {\mat}      [1] {\left[\begin{array}{#1}}
%	\newcommand {\rix}          {\end{array}\right]}
%
%\newcommand {\diag}     {\mathop{\rm diag}\nolimits}
%\newcommand {\Span}     {\mathop{\rm span}\nolimits}
%\newcommand{\mat}[1]{\begin{matrix}#1\end{matrix}}
%\newcommand{\bmat}[1]{\begin{bmatrix}#1\end{bmatrix}}
%\newcommand{\pmat}[1]{\begin{pmatrix}#1\end{pmatrix}}
%\newcommand{\bsmat}[1]{\begin{bsmallmatrix}#1\end{bsmallmatrix}}
%\newcommand{\pa}{\partial}
%\newcommand{\dd}[2]{\frac{\td#1}{\td#2}}
%\newcommand{\pd}[2]{\frac{\pa#1}{\pa#2}}
%\newcommand{\dualp}[2]{\langle #1 \,|\, #2 \rangle}
%\newcommand{\ceq}{\coloneqq}
%\newcommand{\td}{\textup{d}}
%\newcommand{\set}[1]{\left\{#1\right\}}
%\newcommand{\pset}[1]{\left(#1\right)}
%\newcommand{\bset}[1]{\left[#1\right]}
%\newcommand{\bilp}[1]{\llangle #1 \rrangle}
%\newcommand{\missref}{{[\#]}}
%\newcommand{\up}[1]{^{(#1)}}
\setlength{\belowcaptionskip}{-10pt}
%\usepackage{suffix}
%\newcommand{\fc}[2]{\mathcal C^{#1}(\mathcal S,#2)}
%\WithSuffix\newcommand\fc*[1]{\mathcal C(\mathcal S,#1)}

\newcommand{\norm}[1]{\left\lVert#1\right\rVert}

\newtheorem{example}{Example}[section]

\usepackage{tikz}
\usepackage[siunitx,RPvoltages]{circuitikz}
%\usepackage{suffix}
%\usepackage{subcaption}
%\usetikzlibrary{shapes.geometric}
%\usetikzlibrary{arrows,chains,scopes}

\title{On non-Hermitian positive (semi)definite linear algebraic systems arising from dissipative Hamiltonian DAE\MakeLowercase{s} \thanks{Version of \today.}}

\author{Candan G{\"u}d{\"u}c{\"u}\footnotemark[2] \and J{\"o}rg Liesen\footnotemark[2]
\and Volker Mehrmann\footnotemark[2] \and Daniel B. Szyld\footnotemark[4]}

\begin{document}
	
\maketitle

\renewcommand{\thefootnote}{\fnsymbol{footnote}}

\footnotetext[2]{Institute of Mathematics, TU Berlin, Stra{\ss}e des 17.~Juni 136, 10623 Berlin, Germany.  Email: {\tt \{guducu,liesen,mehrmann\}@math.tu-berlin.de}.}
\footnotetext[4]{Department of Mathematics, Temple University, 1805 N.~Broad Street, Philadelphia, PA 19122, USA. Email: {\tt szyld@temple.edu}.}

\begin{abstract}
We discuss different cases of dissipative Hamiltonian differential-algebraic equations and the linear algebraic systems that arise in their linearization or discretization. For each case we give examples from practical applications. An important feature of the linear algebraic systems is that the (non-Hermitian) system matrix has a positive definite or semidefinite Hermitian part.
In the positive definite case we can solve the linear algebraic systems iteratively by Krylov subspace methods based on efficient three-term recurrences. We illustrate the performance of these iterative methods on several examples. The semidefinite case can be challenging and requires additional techniques to deal with ``singular part'', while the ``positive definite part'' can still be treated with the three-term recurrence methods.
\end{abstract}

\begin{keywords}
dissipative Hamiltonian system, port-Hamiltonian system, descriptor system, differential-algebraic equation, linear algebraic system, positive semidefinite Hermitian part, Krylov subspace method
\end{keywords}

\begin{AMS}
65L80, 65F10, 93A15, 93B11, 93B15
\end{AMS}

\section{Introduction}

It is well known that every matrix $A\in\C^{n,n}$ can be split into its Hermitian and skew-Hermitian parts, i.e.,
\begin{equation}\label{eqn:split}
A=H+S,\quad H=\frac12 (A+A^*)\quad\mbox{and}\quad S=\frac12 (A-A^*),
\end{equation}
where $A^*$ is the Hermitian transpose (or the transpose in the real case) of $A$, so that $H=H^*$ and $S=-S^*$. This simple, yet fundamental observation has many useful applications. For example, Householder used it in~\cite[p.~69]{Hou64} to show that all eigenvalues of $A=H+S$ lie in or on the smallest rectangle with sides parallel to the real and imaginary axes that contains all eigenvalues of $H$ and of $S$. This result is attributed to Bendixson~\cite{Ben1902}, and was refined by Wielandt~\cite{Wie55}. It shows that if $H$ is positive definite, then all eigenvalues of $A$ have a positive real part, and therefore such (in general non-Hermitian) matrices $A$ are sometimes called \emph{positive real}. Here we call $A=H+S$ \emph{positive definite} or \emph{positive semidefinite} if $H$ has the corresponding property.

Our first goal in this paper is to show that, while every matrix $A\in\C^{n,n}$ trivially splits into $A=H+S$, there is an important class of practically relevant applications where this splitting \emph{occurs naturally} and has a \emph{physical meaning}. The class of applications we consider is given by energy-based modeling using differential algebraic equation (DAE) systems in dissipative Hamiltonian (dH) form, or for short dHDAE systems. The applicability of this modeling approach has been demonstrated in a variety of
application areas
%physical domains
such as thermodynamics, electromagnetics, fluid mechanics, chemical processes, and general optimization; see, e.g.,~\cite{EbeM04,GayY19,HamLM07,HamLM08,HauMMMRS19,StePS15}. Properties of dHDAE systems have been studied in numerous recent publications;
see, e.g.,~\cite{BeaMV19,BeaMXZ18,GilMS18,MehMS16,MehMS18,MehMW18,MehMW21,MehM19,Sch13}.

We systematically discuss different cases of linear and constant-coefficient dHDAE systems, and we illustrate these cases with examples from practical applications. The linear algebraic systems that arise from the linearization and/or discretization of the dHDAE systems are of the form $A=H+S$, where the Hermitian part $H$ (and hence~$A$) is positive definite or at least positive semidefinite.

We also discuss how to solve the linear algebraic systems arising from dHDAE systems. In the positive definite case, Krylov subspace methods based on efficient three-term recurrences can be used. The semidefinite case can be challenging and typically requires additional techniques that deal with the ``singular part'' of $H$, while the ``positive definite part'' of $H$ still allows an application of three-term recurrence methods. We show that the formulation of the dHDAE system often leads to a linear algebraic system where the ``singular part'' of $H$ can be identified without much additional effort. For problems where this is not the case we show how on the linear algebraic level the ``singular part'' of $H$ can be isolated and dealt with using a unitary congruence transformation to a \emph{staircase form}, and further via Schur complement reduction to a block diagonal form.

The paper is organized as follows.
In Section~\ref{sec:dHDAE} we introduce the standard form of linear and constant-coefficient dHDAE systems, and in Section~\ref{sec:examples} we give a systematic overview of the different cases of these systems. In Section~\ref{sec:lin-sys} we discuss the form of linear algebraic systems arising from the time-discretization of dHDAE systems, and we describe a staircase form for these systems. In Section~\ref{sec:solvers} we discuss iterative methods based on three-term recurrences for the discretized systems, and in Section~\ref{sec:tests} we present numerical examples with these methods applied to different cases of dHDAE systems. The paper ends with concluding remarks in Section~\ref{sec:conclusion}.

%\section{Brief review of consequences of the splitting~\eqref{eqn:split}} \label{sec:background}
%As we mentioned, the simple splitting~\eqref{eqn:split} has been used in multiple contexts, both as
%a theoretical tool, and as a way to design numerical methods. For example, Householder used it
%in~\cite[p.~69]{Hou64} to show that all eigenvalues of $A=H+S$ lie in or on the smallest rectangle with sides parallel to the real and imaginary axes that contains all eigenvalues of $H$ and of $S$. This result is attributed to Bendixson~\cite{Ben1902}, and was refined by Wielandt~\cite{Wie55}. It shows that if $H$ is positive definite, then all eigenvalues of $A$ have a positive real part, and therefore such (in general non-Hermitian) matrices $A$ are sometimes called \emph{positive real}. However, as we mentioned,  here we call them \emph{positive definite}, and \emph{positive semidefinite} if $H$ has that property.

\section{Linear dissipative Hamiltonian DAE systems}\label{sec:dHDAE}

The standard form of a linear dHDAE system, where for simplicity we consider the case of constant (i.e., time-invariant) coefficients, is given by
\begin{eqnarray}
E\dot x &=& (J-R)Qx +f,\label{eqn:dHDAE-1}\\
x(t_0) &=& x^0; \label{eqn:dHDAE-iv}
\end{eqnarray}
see~\cite{BeaMXZ18,MehMW18}, where this class is introduced and studied in the context of control problems for port-Hamiltonian (pH) systems.
The physical properties of the modeled system are encoded in the algebraic structure of the coefficient matrices.
The matrix $E\in\C^{n,n}$ is called \emph{flow matrix}, the skew-Hermitian \emph{structure matrix} $J\in\C^{n,n}$ describes the energy flux among energy storage elements, the Hermitian positive semidefinite \emph{dissipation matrix} $R\in\C^{n,n}$ describes energy loss and/or dissipation.
The energy function or \emph{Hamiltonian} associated with the system \eqref{eqn:dHDAE-1} is given by the function
\[
\Ha(x)=\frac12 (x^*Q^*Ex),
\]
and typically, since this is an energy, one has that
\begin{equation}\label{eqn:dHDAE-3}
E^*Q=Q^*E\geq 0, %\quad J=-J^*,\quad R=R^*\geq 0,
\end{equation}
where $H\geq 0$ means that the Hermitian matrix $H$ is positive semidefinite. Note that~\eqref{eqn:dHDAE-3} implies that $\Ha(x)\geq 0$ for all states $x$.

Linear dHDAE systems of the form \eqref{eqn:dHDAE-1} often arise directly in mathematical modeling, or as a result of linearization along a stationary solution for general, nonlinear dHDAE systems; see, e.g.,~\cite{MehM19}. In many applications, furthermore, the matrix~$Q$ is the identity, and if not, it can be turned into an identity for a subsystem; see~\cite[Section~6.3]{MehMW21}. Thus, in the following we restrict ourselves to dHDAE systems of the form
\begin{equation}\label{eqn:dHDAE-final}
E\dot x=(J-R)x+f,\quad\mbox{where}\quad E=E^*\geq 0,\quad J=-J^*,\quad R=R^*\geq 0.
\end{equation}
For analyzing the system \eqref{eqn:dHDAE-final} it is useful to transform it into a \emph{staircase form} that reveals its ``positive definite part'' and its ``singular part'', as well as the common nullspaces (if any) of the different matrices. Such a form was derived using a sequence of spectral and singular value decompositions in~\cite[Lemma~5]{AchAM21}, and is adapted here to our notation.
%and systems of the form \eqref{eqn:dHDAE-final}.
%	
\begin{lemma} \label{lem:regul}
For every dHDAE system of the form \eqref{eqn:dHDAE-final} there exists a unitary (basis transformation)
matrix $\tilde V\in \mathbb C^{n, n}$, such that the system in the new variable
$$
\tilde{x}=\tilde{V}^* x=\begin{bmatrix} \tilde{x}_1 \\ \tilde{x}_2 \\ \tilde{x}_3  \\ \tilde{x}_4\\ \tilde{x}_5\end{bmatrix} 		
	\begin{tabular}{l}
	$\left.\lefteqn{\phantom{\begin{matrix} \end{matrix}}}\hspace*{-4mm} \right\}n_1$\\
	$\left.\lefteqn{\phantom{\begin{matrix}  \end{matrix}}}\hspace*{-4mm} \right\}n_2$\\
	$\left.\lefteqn{\phantom{\begin{matrix} \end{matrix}}}\hspace*{-4mm} \right\}n_3$\\
	$\left.\lefteqn{\phantom{\begin{matrix} \end{matrix}}}\hspace*{-4mm} \right\}n_4$\\
	$\left.\lefteqn{\phantom{\begin{matrix} \end{matrix}}}\hspace*{-4mm} \right\}n_5$
\end{tabular}
$$
has the $5\times 5$ block form
\begin{align}
&\begin{bmatrix}
E_{11} & E_{12} & 0 & 0 &0\\
E_{21} & E_{22} & 0 & 0 &0\\
0 & 0 & 0 &0 &0\\
0 & 0 & 0 &0 &0\\
0 & 0 & 0 &0&0\end{bmatrix}
\begin{bmatrix} \dot{\tilde{x}}_1 \\ \dot{\tilde{x}}_2 \\ \dot{\tilde{x}}_3  \\ \dot{\tilde{x}}_4\\ \dot{\tilde{x}}_5\end{bmatrix} =\label{stc1} \\
& \begin{bmatrix}
J_{11}-R_{11} &  J_{12}-R_{12}& J_{13}-R_{13} & J_{14}&0\\
J_{21}-R_{21} & J_{22}-R_{22} & J_{23}-R_{23} & 0 &0\\
J_{31}-R_{31} & J_{32}-R_{32} & J_{33} -R_{33} & 0 & 0\\
J_{41} & 0 & 0  & 0 & 0\\
0& 0 &  0 & 0& 0\end{bmatrix} \begin{bmatrix} \tilde{x}_1 \\ \tilde{x}_2 \\ \tilde{x}_3  \\ \tilde{x}_4\\ \tilde{x}_5\end{bmatrix}+\begin{bmatrix} f_1(t) \\ f_2(t) \\ f_3(t)  \\ f_4(t) \\ f_5(t) \end{bmatrix},\label{stc2}
\end{align}
where $n_1,n_2,n_3,n_4,n_5\in {\mathbb N}_0$, and $n_1=n_4$. If it is present in \eqref{stc1}, the matrix $\begin{bmatrix} E_{11}& E_{12}\\ E_{21} & E_{22} \end{bmatrix}$ (or just $E_{22}$ if $n_1=n_4=0$) is Hermitian positive
definite, and if they are present in \eqref{stc2}, the matrices $J_{33}-R_{33}$ and $J_{41}=-J_{14}^*$ are nonsingular.
\end{lemma}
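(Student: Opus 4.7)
The plan is to construct $\tilde V$ as a product of unitary factors arising from three kinds of decomposition: a spectral decomposition of $E$, a spectral decomposition of the restriction of $R$ to $\ker E$, and singular value decompositions used to reveal the skew-Hermitian coupling structure among the remaining blocks. Throughout, unitary congruence preserves the constraints $E=E^*\geq 0$, $J=-J^*$, $R=R^*\geq 0$, so the block properties required in the statement follow from applying these decompositions to the correct submatrices.

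First, since $E=E^*\geq 0$, a unitary $U_1$ diagonalizes $E$ as $U_1^*EU_1=\mathrm{diag}(E_+,0)$ with $E_+$ positive definite of dimension $n_1+n_2$. Conjugating $J-R$ by $U_1$ yields a conformally blocked system in which only the upper-left block of $E$ is nonzero. Next, the lower-right block of the transformed $R$ (its restriction to $\ker E$) is Hermitian positive semidefinite, so a second unitary $U_2$ acting only on that block diagonalizes it as $\mathrm{diag}(R_{33},0)$ with $R_{33}$ positive definite; this defines the block of dimension $n_3$, in which $J_{33}-R_{33}$ has Hermitian part $-R_{33}$ and is therefore nonsingular. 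A key observation is that since $R\geq 0$ globally and the diagonal block of $R$ on the remaining $n_4+n_5$ subspace is zero, the full block-row and block-column of $R$ associated with that subspace must vanish as well; this immediately explains the absence of any $R$-entries in the last two block-rows and columns of \eqref{stc2}.

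On the remaining subspace of dimension $n_4+n_5$ only the skew-Hermitian $J$ contributes to the residual couplings. I would identify the common null subspace of (a) the $J$-couplings from this subspace into the $n_1+n_2$ and $n_3$ blocks and (b) the internal skew-Hermitian action on $n_4+n_5$; this common nullspace defines the $n_5$ block, whose block row and column in $J-R$ are then zero by construction. On its orthogonal complement, of dimension $n_4$, a compatible SVD of the coupling into the positive-definite-$E$ subspace selects its range as the $n_1$ block and produces a square nonsingular $J_{41}$ of size $n_4\times n_1$, so $n_1=n_4$; skew-Hermiticity of $J$ then gives $J_{14}=-J_{41}^*$.

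The main obstacle is realizing all the required zeros in the last two block rows and columns of \eqref{stc2} simultaneously --- in particular, decoupling the $n_4+n_5$ subspace from the $n_3$ block and annihilating the internal $J$-entries $J_{44}$, $J_{45}$, $J_{55}$ --- since a single SVD does not in general accomplish this while also producing the rank-revealing coupling to $n_1$. Following \cite[Lemma~5]{AchAM21}, I would employ an iterative deflation in which alternating spectral and singular value decompositions peel off one rank direction at a time from the remaining skew-Hermitian part, each elimination chosen to be compatible with the structure fixed in the preceding steps. The iteration terminates precisely when the staircase form \eqref{stc1}--\eqref{stc2} is reached, at which point the nonsingularity of $J_{33}-R_{33}$ and $J_{41}$ and the equality $n_1=n_4$ are built in.
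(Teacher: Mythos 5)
Your opening moves are fine: splitting off $\ker E$ with a spectral decomposition, and the observation that a zero diagonal block of the positive semidefinite $R$ forces the entire corresponding block row and column of $R$ to vanish, are both correct (and the paper itself gives no proof but defers to \cite[Lemma~5]{AchAM21}). The genuine gap is in your second step: you define the third block as the range of the compression $R_{22}$ of $R$ to $\ker E$, i.e.\ you split $\ker E$ by the spectral decomposition of $R_{22}$. The correct split is by the kernel of the compression of $J-R$ to $\ker E$: since $J_{22}$ is skew-Hermitian and $R_{22}=R_{22}^*\geq 0$, $(J_{22}-R_{22})x=0$ implies $x^*R_{22}x=0$, hence $R_{22}x=0$ and then $J_{22}x=0$, so $\ker(J_{22}-R_{22})=\ker J_{22}\cap\ker R_{22}$; the orthogonal complement of this kernel inside $\ker E$ is the third block, and $J_{33}-R_{33}$ is nonsingular there even though $R_{33}$ is in general only semidefinite. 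Your choice $n_3=\operatorname{rank}R_{22}$ is too small whenever $J_{22}$ acts nontrivially on $\ker R_{22}$, and your stated reason for the nonsingularity of $J_{33}-R_{33}$ (namely $R_{33}>0$) is not the mechanism behind the staircase form.

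Moreover, the difficulty you flag and defer to an ``iterative deflation'' is not a repairable technicality within your setup: the entries $J_{44},J_{45},J_{55}$ and the coupling of your fourth/fifth blocks to the third block come from the compression of $J$ to $\ker E\cap\ker R$, and a unitary congruence preserves the rank of that compression, so these entries cannot be annihilated; the corresponding directions must instead be absorbed into block~3. Concretely, for $E=0$, $R=0$, $J=\bigl[\begin{smallmatrix}0&1\\-1&0\end{smallmatrix}\bigr]$ your construction gives $n_3=0$ and a two-dimensional ``block $4+5$'' with nonzero internal $J$ and no $E$-positive-definite part to couple to, so neither the zero pattern of \eqref{stc2} nor $n_1=n_4$ can be reached, whereas the lemma's form here has $n_3=2$ with $J_{33}-R_{33}=J$ nonsingular and $R_{33}=0$. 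With the corrected second step the remainder of your outline works in a single pass and no iteration is needed: for $x\in\ker(J_{22}-R_{22})$ one has $Rx=0$ (full matrix) and $Jx\in\operatorname{range}E$, so the only surviving coupling of blocks $4$ and $5$ is the $J$-block into the $E$-positive-definite part, and one SVD of that block produces the split $n_1=n_4$, the square nonsingular $J_{41}=-J_{14}^*$, and the zero block $n_5$, which is exactly the common nullspace of $E$, $J$, and $R$ as you correctly identified.
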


>From the staircase form \eqref{stc1}--\eqref{stc2} we immediately see that the initial value problem \eqref{eqn:dHDAE-final} is uniquely solvable (for consistent initial values and sufficiently often differentiable inhomogeneities $f$) if and only if the last block row and column in the matrices (which contain only zeros) do not occur, i.e., if $n_5=0$. If $n_5\neq 0$, then $\tilde{x}_5$ can be chosen arbitrarily. In the following we assume that $n_5=0$, i.e., we assume throughout that~\eqref{eqn:dHDAE-final} is uniquely solvable. Equivalently, we assume that the pencil $\lambda E-(J-R)$ is regular.

As shown in~\cite[Corollary~1]{AchAM21}, the differentiation index (i.e., the size of the largest Jordan block associated with the eigenvalue $\infty$) of a regular pencil $\lambda E-(J-R)$ in terms of the staircase form \eqref{stc1}--\eqref{stc2} is given by
\begin{align*}
& \mbox{zero if and only if $n_1=n_4=0$ and $n_3=0$ (or simply $n_2=n$),}\\
& \mbox{one if and only if $n_1=n_4=0$ and $n_3>0$,}\\
& \mbox{two if and only if $n_1=n_4>0$.}
\end{align*}
These are all possible cases that can occur. It is easy to see that the positive definite case $E=E^*>0$ in \eqref{eqn:dHDAE-final} corresponds to a staircase form \eqref{stc1}--\eqref{stc2} with $n_2=n$ and hence to the index zero, regardless of the properties of $J$ and $R$. On the other hand, a singular matrix $E=E^*\geq 0$ corresponds to an index either one or two, depending on the relation between the matrices $E,J,R$. Distinguishing between these three cases will be important in our overview in the next section.

In numerical practice, a computation of the form \eqref{stc1}--\eqref{stc2} for a given dHDAE system requires a sequence of nullspace computations, which can be carried out by singular value decompositions.
Unfortunately, these sequences of dependent rank decisions may be very sensitive under perturbations; see, e.g., \cite{ByeMX07} where the construction of similar staircase forms and the challenges are discussed. Also, these factorizations are often not efficiently computable for large-scale problems. However, as we will demonstrate with several examples in the next section, in many cases the structural properties arising from physical modeling help to make this process easier.

\section{Different cases and specific examples}\label{sec:examples}

We will now present a systematic overview of different cases of systems of the forms \eqref{eqn:dHDAE-final} or \eqref{stc1}--\eqref{stc2} that occur in applications, ordered by properties of $E$ and the index of the (regular) pencil $\lambda E-(J-R)$. The examples given in this section demonstrate the large variety of applications for dHDAEs.

\subsection*{Case 1: Positive definite $E$, index zero}
 The case of $E=E^*>0$ in \eqref{eqn:dHDAE-final}, or $n_2=n$ in \eqref{stc1}--\eqref{stc2}, is the ``simplest'' one.
This case usually leads to a positive definite Hermitian part of the coefficient matrix in the linear algebraic system; see Section~\ref{sec:lin-sys} below.
	
\begin{example}[index zero]\label{ex:MDK}{\rm
Consider the classical second order representation of a linear damped mechanical system, which is given by
\begin{equation} \label{MDK_eq}
M\ddot{x} + D\dot{x}+Fx=f,
\end{equation}
where $M,D,F\in\R^{n,n}$ are Hermitian matrices with $M,F>0$ and $D\geq0$; see,
e.g.,~\cite[Chapter 1]{Ves11}.
By introducing the variables, $\hat{x}_2=x$ and $\hat{x}_1 = \dot{x}$, equation \eqref{MDK_eq} can be written as
\begin{equation} \label{eq_brake_dae}
\begin{bmatrix}M &0 \\0 & F \end{bmatrix} \begin{bmatrix} \dot{\hat x}_1 \\ \dot{\hat x}_2 \end{bmatrix} = \Bigg(  \begin{bmatrix}0  & -F \\ F &0  \end{bmatrix} - \begin{bmatrix} D & 0\\ 0&0 \end{bmatrix}\Bigg)
\begin{bmatrix} \hat x_1  \\  \hat x_2 \end{bmatrix} +
	\begin{bmatrix} f \\ 0 \end{bmatrix},
\end{equation}
which is of the form \eqref{eqn:dHDAE-final} with $E=\begin{bmatrix}M &0 \\0 & F \end{bmatrix}=E^*>0$.
}
\end{example}
	
\begin{example}[index zero]\label{ex:poro}{\rm
The discretization of the poroelasticity equations that model the deformation of porous media saturated by an incompressible viscous fluid in first order formulation as in~\cite[Section~3.4]{AltMU21} leads to a dHDAE of the form
\begin{align}
\label{eqn:twoField:opMatrix3}
\begin{bmatrix}Y & 0 & 0\\ 0 & A & 0 \\ 0 & 0 & M\end{bmatrix}
  \begin{bmatrix} \dot{w} \\\dot{u}\\ \dot{p} \end{bmatrix}
  = \left(\begin{bmatrix} 0 & -A & D^* \\ A & 0 & 0 \\ -D & 0 & 0 \end{bmatrix}-
\begin{bmatrix} 0 & 0 &0 \\ 0 & 0 & 0 \\ 0 & 0 & F \end{bmatrix}\right)
  \begin{bmatrix} w\\ u\\ p \end{bmatrix}  + \begin{bmatrix} f\\ 0 \\ g \end{bmatrix},
\end{align}
where $A,M,Y$ are Hermitian positive definite (where $Y$ is of very small norm), $F$ is typically Hermitian positive semidefinite, and $D$ is general, non-Hermitian. Here $u$ represents the discretized displacement field, $w$ the associated discretized velocities, and $p$ the discretized pressure.
Again we have a system of the form \eqref{eqn:dHDAE-final} with $E={\rm diag}(Y,A,M)=E^*>0$.
}
\end{example}

\subsection*{Case 2: Positive semidefinite $E$, index one}
In this case we have a staircase form \eqref{stc1}--\eqref{stc2} with $n_1=n_4=0$ and $n_3\neq 0$, which after renumbering the equations and unknowns can be written as
\begin{equation}\label{case2}
\begin{bmatrix} E_{11} & 0 \\ 0 & 0 \end{bmatrix}
\begin{bmatrix} \dot{x}_1 \\ \dot{x}_2 	\end{bmatrix}=
\begin{bmatrix} J_{11} - R_{11} & J_{12}-R_{12} \\ J_{21}-R_{21} & J_{22}-R_{22}  \end{bmatrix}\begin{bmatrix} 	x_1 \\ x_2 	\end{bmatrix} +f,
\end{equation}
where $E_{11}=E_{11}^*>0$ and where $J_{22}-R_{22}$ is nonsingular. Note that if it is known in advance that the given dHDAE has index one, the form \eqref{case2} can be obtained from \eqref{eqn:dHDAE-final} by a single (unitary) transformation that ``splits off'' the nullspace of $E$.
Whether the coefficient matrix $A=H+S$ of the corresponding linear algebraic system (after time discretization) in this case has a positive definite or semidefinite Hermitian part $H$ depends on the properties of $R_{22}$. The Hermitian part is of the form $H=E+\frac{\tau}{2}R$ (see Section~\ref{sec:lin-sys} below), and hence a positive definite $R_{22}$ will lead to a positive definite $H$, which may be (highly) ill-conditioned, since $R$ is multiplied by the potentially small constant $\tau/2$.

\begin{figure}[h]
		\centering
		\begin{circuitikz}[scale=1.2,/tikz/circuitikz/bipoles/length=1cm]
			\draw (0,0) node[ground] {} to[american controlled voltage source,invert,v>=$E_G$] (0,1) to[R=$R_G$,i>=$\II_G$] (0,2) to (1,2) to[L=$L$,i>=$\II$] (2,2) to[R=$R_L$] (3,2) to (4,2) to[R=$R_R$,i<=$\II_R$] (4,0) node[ground] {};
			\draw (1,2) to[C=$C_1$,v>=$V_1$,i<=$\II_1$,*-] (1,0) node[ground] {};
			\draw (3,2) to[C,l_=$C_2$,v^>=$V_2$,i<_=$\II_2$,*-] (3,0) node[ground] {};
		\end{circuitikz}
		\caption{A simple RLC circuit}\label{fig:circuit}
	\end{figure}
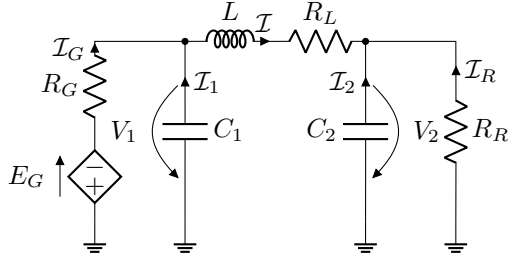
	
\begin{example}[index one]\label{ex:circuit}{\rm
Consider the linear RLC circuit shown in Figure~\ref{fig:circuit} (see~\cite[Example 4.1]{MehM19}), which is modeled by the following equations:
\begin{align*}
			L\dot{\II} &= -R_L \II +V_2 - V_1, \\
			C_1 \dot{V}_1 &= \II - \II_G, \\
			C_2 \dot{V}_2 &= - \II -\II_R, \\
			0 &= -R_G\II_G + V_1 + E_G, \\
			0 &= -R_R\II_R + V_2.
\end{align*}
Here $R_G, R_L, R_R > 0$ represent  resistances, $L > 0$ inductances, $C_1, C_2 > 0$ capacitances, and $E_G$ a controlled voltage source. The equations can be written in the form \eqref{case2} with $E = {\rm diag}(L, C_1 , C_2 , 0, 0)$, the vector of unknowns $ x = [\II^*, V_1^* , V_2^* , I_G^* , I_R^*]^*$,
\[
J= \begin{bmatrix} 	0 & -1 & 1 & 0 & 0 \\ 1 & 0 & 0 & -1 & 0 \\ -1 & 0 & 0 & 0 & -1 \\ 0 & 1 & 0 & 0 & 0 \\ 0 & 0& 1 & 0 & 0
\end{bmatrix},\quad\mbox{and}\quad
R= \begin{bmatrix} 	R_L & 0 & 0 & 0 & 0 \\ 0 & 0 & 0 & 0 & 0 \\ 0 & 0 & 0 & 0 & 0 \\ 0 & 0 & 0 & R_G & 0 \\ 0 & 0& 0 & 0 & R_R
\end{bmatrix},
\]
so that $E_{11}=E_{11}^*={\rm diag}(L,C_1,C_2)>0$, and $J_{22}-R_{22}=-{\rm diag}(R_G,R_R)$ is nonsingular, and the nullspace of $E$ is displayed directly. Note that most RLC circuits (potentially with millions of equations and unknowns) have this index-one structure, but occasionally they have index two \cite{EstT00}.
}
\end{example}

\begin{example}[index one]\label{ex:stokes}
{\rm
The space discretization of the unsteady incompressible Stokes or linearized Navier-Stokes equations via finite element or finite difference methods typically leads to dHDAE systems of the form
\begin{equation}\label{P-instat-op-general}
\begin{bmatrix} M & 0 \\ 0 & 0 \end{bmatrix}
 \begin{bmatrix} \dot{v} \\ \dot{p} \end{bmatrix} =
\left(\begin{bmatrix} A_S & B\\ -B^* & 0\end{bmatrix} -
\begin{bmatrix} -A_H & 0\\ 0 & -C\end{bmatrix}\right)
\begin{bmatrix} v\\p\end{bmatrix}+\begin{bmatrix}f\\g\end{bmatrix},
\end{equation}
where $M=M^*>0$ is the mass matrix, $A_S=-A_S^*$, $B^*$ is the discretized divergence operator (normalized so that it is of full row rank), $-A_H=-A_H^*\geq 0$, and $-C=-C^*>0$ is a stabilization term, typically of small norm; see, e.g., \cite{EmmM13}. In the Stokes case we usually have $A_S=0$. Here $v$ and $p$ denote the discretized velocity and pressure, respectively. In terms of \eqref{case2} we have the Hermitian positive definite matrix $E_{11}=M$, and the nonsingular matrix $J_{22}-R_{22}=-C$.
}
\end{example}

\subsection*{Case 3: Positive semidefinite $E$, index~two}
In this case we have a staircase form \eqref{stc1}--\eqref{stc2} with $n_1=n_4>0$.

\begin{example}[index two] %, $n_3=0$}]
\label{ex:poro2}
{\rm
Consider Example~\ref{ex:poro} in the quasi-stationary regime (see \cite{Sho00}), where one usually sets $Y=0$. After a permutation of the block rows, the system has the form
\begin{align}
\label{eqn:twoField:opMatrixstat}
\begin{bmatrix}M & 0 & 0\\ 0 & A & 0 \\ 0 & 0 & 0\end{bmatrix}
  \begin{bmatrix} \dot{p} \\ \dot{u}\\ \dot{w} \end{bmatrix}
  = \left(
    \begin{bmatrix} 0 & 0 & -D \\ 0 & 0 & A \\ D^* & -A & 0 \end{bmatrix}-
  \begin{bmatrix} F & 0 & 0 \\ 0 & 0 & 0 \\ 0 & 0 & 0 \end{bmatrix}\right)
  \begin{bmatrix} p\\ u\\ w \end{bmatrix}  + \begin{bmatrix} g\\ 0 \\ f \end{bmatrix}.
\end{align}
with $A=A^*,M=M^*$ positive definite. The form \eqref{stc1}--\eqref{stc2} with $n_3=0$
is obtained by performing a $QR$ decomposition of the full row rank matrix
$[D^*\, -A]$, and then transforming the system accordingly.
}
\end{example}

\begin{example}[index two]
\label{ex:stokeswostab}
{\rm
Consider Example~\ref{ex:stokes} without stabilization, i.e., with $C=0$. Let $B^* =U_B [\Sigma\; 0] V_B^*$, be a singular value decomposition with unitary matrices $U_B,V_B$, and a nonsingular diagonal matrix $\Sigma$ (corresponding to the splitting of the space of functions into the subspace of divergence free functions and its orthogonal complement). After a unitary similarity transformation we obtain a staircase form \eqref{stc1}--\eqref{stc2} with $n_3=0$ as follows:
\begin{equation}\label{eqn:stokesstair}
	\begin{bmatrix} M_{11} & M_{12} & 0 \\ M_{21} & M_{22} & 0 \\ 0& 0 & 0 \end{bmatrix}
	\begin{bmatrix} \dot{\hat{v}}_1 \\ \dot{\hat{v}}_2 \\  \dot{\hat{p}} \end{bmatrix} =
	\begin{bmatrix} A_{11} & A_{12} & \Sigma \\ A_{21} & A_{22} & 0 \\ -\Sigma & 0 & 0 \end{bmatrix} \begin{bmatrix} \hat v_1 \\ \hat v_2 \\ \hat p \end{bmatrix} +
	\begin{bmatrix} \hat f_1 \\ \hat f_2  \\ 0 \end{bmatrix}.
\end{equation}
}

\end{example}

\section{Obtaining and transforming the linear algebraic system}\label{sec:lin-sys}

In order to simulate the dynamical behavior of dHDAEs, time-discretization methods have to be employed. In a general (non-linear) setting this is not an easy task, since the methods have to be implicit and they should be structure preserving. Based on an ansatz derived for standard pH systems in~\cite{KotL18}, such methods were derived for dHDAE systems in~\cite{MehM19}. It was shown, in particular, that Gauss-Legendre collocation methods, like the implicit midpoint rule, are well suited for this purpose.

Here we continue to consider a linear dHDAE system of the form \eqref{eqn:dHDAE-final}. Choosing, e.g.,
a uniform time grid $t_0,\ldots,t_N$ with step size~$\tau>0$, the implicit midpoint rule yields a sequence of linear algebraic systems of the form
\begin{equation} \label{eqn:dHDAE-basic}
\Big( E+\frac{\tau}{2} (R-J)\Big) x_{k+1}= b(x_k,\tau)
\end{equation}
for the time-discrete vectors $x_{k}=x(t_{k})$, $k=0,1,2,\dots$. The linear algebraic system \eqref{eqn:dHDAE-basic} is of the form
\begin{equation}\label{eqn:dHDAE-lin-sys}
Ax=b\quad\mbox{with}\quad A=H+S,\quad\mbox{where}\quad H=E+\frac{\tau}{2}R\quad\mbox{and}\quad S=-\frac{\tau}{2}J.
\end{equation}
Thus, the splitting $A$ into its Hermitian and skew-Hermitian parts is a natural consequence of the underlying mathematical model. By construction, the Hermitian part is positive definite or positive semidefinite. Moreover, in many cases,
for any matrix norm we have $H \rightarrow E $ and $ S \rightarrow 0$ as $\tau\rightarrow 0$, so that for small step sizes~$\tau$ we can expect that the Hermitian part is dominant. This observation is of interest in the context of iteratively solving \eqref{eqn:dHDAE-lin-sys}; see Section~\ref{sec:solvers} below.

Iterative methods for solving systems of the form \eqref{eqn:dHDAE-lin-sys} are often based on the assumption that $H$ is (positive) definite and hence nonsingular; see Section~\ref{sec:solvers} below for examples.
In case of a singular matrix $H$, it is advantageous to identify its ``singular part'' and treat it separately in the numerical solution algorithm. As shown in Section~\ref{sec:examples}, the mathematical modeling frequently leads to a staircase form \eqref{stc1}--\eqref{stc2} with block matrices, where the ``singular part'' of $H$ is readily identified. If this is not possible on the modeling level, one can apply an appropriate reduction (at least in theory, or for small scale practical problems) on the algebraic level. Well-known techniques from the literature that can be applied also in this context include Schur complement constructions or null-space deflation; see, e.g.,~\cite{BenGL05,GreW19,WatG20}. We will now show how to transform $A=H+S$ using simultaneous unitary similarity transformations to a staircase form, and further to a block diagonal form, where the ``singular part'' is located in the bottom block.

The following result is a special case of the controllability staircase form~\cite{Van79}; see also~\cite{AchAM21}. We present the proof because some of its features will be used later.
\begin{lemma}\label{lem:sts}
Consider $A=H+S\in\C^{n,n}$, where $0\neq H=H^*\geq 0$ and $0\neq S=-S^*$. Then there exist a unitary matrix $U\in\C^{n,n}$, and integers $n_1 \geq n_2 \geq \cdots \geq n_{r-1} >0$ and $n_r \geq 0$, such that
\begin{equation}\label{eq:stcform}
U^*HU= \begin{bmatrix} H_{11} & 0 \\ 0 & 0\end{bmatrix}\;\;\mbox{and}\;\;
U^*SU =
\begin{bmatrix}
S_{11} & S_{12} &  & & 0 \\
S_{21} & S_{22} &  \ddots &  & 0 \\
&  \ddots & \ddots  & S_{r-2,r-1} &\vdots  \\
& &  S_{r-1,r-2}& S_{r-1,r-1} & 0\\
0 & \cdots & \cdots  & 0 & S_{r,r}
\end{bmatrix} ,
\end{equation}
where $H_{11}=H_{11}^* \in\C^{n_1,n_1}$ is positive definite, $S_{ii}=-S_{ii}^*\in\C^{n_i,n_i}$ for $i=1,\dots,r$,
and $S_{i,i-1}=-S_{i-1,i}^*=[\Sigma_{i,i-1}\;0]\in\C^{n_{i},n_{i-1}}$ with $ \Sigma_{i,i-1}$
being nonsingular for $i=2,\dots,r-1$.
\end{lemma}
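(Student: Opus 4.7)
The plan is to build the staircase form iteratively, combining a single spectral decomposition of $H$ with a sequence of singular value decompositions applied to successive off-diagonal blocks of $S$. Every transformation along the way is a unitary similarity, so the Hermitian part of $H$ and the skew-Hermitian part of $S$ are preserved automatically.

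First I would exploit that $H=H^*\geq 0$ is nonzero: take a unitary $U_0$ that diagonalizes $H$ with its positive eigenvalues first, giving $U_0^* H U_0 = \mathrm{diag}(H_{11}^{(0)},0)$ with $H_{11}^{(0)}$ positive definite of size $n_1 = \mathrm{rank}(H)\geq 1$. In the induced $2\times 2$ block partition of $U_0^* S U_0$, I would SVD the off-diagonal block, $S_{21}^{(0)} = W_2 \bigl[\begin{smallmatrix}\Sigma_{21} & 0\\ 0 & 0\end{smallmatrix}\bigr] V_1^*$ with $\Sigma_{21}$ square nonsingular of size $n_2 = \mathrm{rank}(S_{21}^{(0)})$, and conjugate by $\mathrm{diag}(V_1,W_2)$. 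This yields a three-block partition of sizes $n_1,n_2,n-n_1-n_2$ in which the new $(2,1)$ block equals $[\Sigma_{21}\;0]$, the new $(1,2)$ block equals $-[\Sigma_{21}\;0]^*$, and the $(1,3),(3,1)$ blocks are automatically zero; meanwhile $H$ becomes $\mathrm{diag}(V_1^* H_{11}^{(0)} V_1,0,0)$, still block-diagonal with a positive definite top-left block.

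Next comes the iterative engine. Suppose inductively that after $k$ stages one has a partition $n_1,\dots,n_k,m$ in which $H$ is block-diagonal and supported on the first block, each $(i,i-1)$ block for $i=2,\dots,k$ already has the form $[\Sigma_{i,i-1}\;0]$ with $\Sigma_{i,i-1}$ nonsingular, every off-diagonal block of $S$ at distance $\geq 2$ from the main diagonal is zero, and the next unreduced block $S_{k+1,k}$ is arbitrary. To pass to stage $k+1$, I would compute the SVD $S_{k+1,k} = W\bigl[\begin{smallmatrix}\Sigma & 0\\ 0 & 0\end{smallmatrix}\bigr] V^*$ with $\Sigma$ square nonsingular of size $n_{k+1}=\mathrm{rank}(S_{k+1,k})$, and apply the block unitary $\mathrm{diag}(I,V,W)$ that acts only on the blocks of sizes $n_k$ and $m$. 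The $H$-part is untouched because $V$ and $W$ act on zero blocks of $H$; all far-off-diagonal zeros of $S$ remain zero because the similarity is localized in two adjacent blocks; and the already-reduced $(k,k-1)$ block $[\Sigma_{k,k-1}\;0]$ is sent to $[V^*\Sigma_{k,k-1}\;0]$, which is still of the required form with nonsingular square top factor. Splitting the old last block of size $m$ into a new $(k+1)$-th block of size $n_{k+1}$ and a new remainder of size $m-n_{k+1}$ yields one more level of the staircase.

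Termination and the stated inequalities then come almost for free. The bound $n_{k+1}=\mathrm{rank}(S_{k+1,k})\leq n_k$ (the number of columns of $S_{k+1,k}$) produces $n_1\geq n_2\geq\cdots\geq n_{r-1}$, and the remainder dimension strictly decreases whenever $n_{k+1}>0$, so the process stops after finitely many stages. It terminates when either $S_{k+1,k}=0$ (the remainder decouples and becomes the isolated block $S_{rr}$ of size $n_r\geq 0$) or the remainder is exhausted (so $n_r=0$). The main obstacle I anticipate is the careful verification that each stage preserves the structure established earlier; this reduces precisely to the observations just made, namely that only the localized $(k,k-1)$ and $(k-1,k)$ blocks of the current staircase are modified by the new $V$-action, and that this modification preserves both the horizontal shape $[\,\cdot\,\;0]$ and the nonsingularity of the square top factor.
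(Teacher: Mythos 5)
Your construction is correct and is essentially the paper's own proof: a rank-revealing (spectral) decomposition of $H$ followed by successive SVDs of the subdiagonal blocks of $S$, applied via block-diagonal unitary congruences, with termination when a subdiagonal block vanishes or the remainder is exhausted. The only difference is that you spell out the inductive bookkeeping (in particular that the earlier blocks $[\Sigma_{k,k-1}\;0]$ are only multiplied on the left by a unitary, so the square factor stays nonsingular though no longer diagonal), which the paper leaves implicit in its ``continue inductively'' step.
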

\begin{proof}%[Proof of Lemma \ref{lem:sts}]
The result is trivial when $H$ is nonsingular (and thus positive definite), since in this case it holds with $U=I$, $r=2$, $n_1=n$, and $n_2=0$.

Let $0\neq H=H^*\geq 0$ be singular. We consider a full rank decomposition of $H$ with a unitary matrix $U_1\in\C^{n,n}$,
\begin{equation}\label{eqn:sim-1}
U_1^*HU_1=\begin{bmatrix} \widehat{H}_{11} & 0\\ 0 & 0\end{bmatrix},
\end{equation}
where we assume that $\widehat{H}_{11}=\widehat{H}_{11}^*\in\C^{n_1,n_1}$, with $1\leq n_1<n$, is positive definite.
Note that this factorization can be obtained from any rank-revealing factorization (e.g., QR or SVD) and then applying the orthogonal factor via a congruence transformation. Applying the same unitary similarity transformation to $S$ gives the matrix
\begin{equation}\label{eqn:sim-2}
\widehat{S}=U_1^*SU_1=\begin{bmatrix} \widehat{S}_{11} & \widehat{S}_{12}\\ \widehat{S}_{21} & \widehat{S}_{22}\end{bmatrix},
\end{equation}
where $\widehat{S}_{11}\in\C^{n_1,n_1}$, and $\widehat{S}_{21}=-\widehat{S}_{12}^*$, since $S$ is skew-Hermitian. If $\widehat{S}_{21}=0$, then we are done. Otherwise, let
\[
\widehat{S}_{21}=W_2\begin{bmatrix}\Sigma_{21} & 0\\ 0 & 0\end{bmatrix}V_2^*
\]
be a singular value decomposition, where $\Sigma_{21}$ is nonsingular (and diagonal), and
$W_2\in\C^{n_1,n_1}$ and $V_2\in\C^{n-n_1,n-n_1}$ are unitary. We define $U_2={\rm diag}(V_2,W_2)\in\C^{n,n}$, which is unitary. Applying a unitary similarity transformation with this matrix to \eqref{eqn:sim-1} and \eqref{eqn:sim-2} yields
$$U_2^*U_1^*HU_1U_2=\begin{bmatrix} V_2^*\widehat{H}_{11}V_2 & 0\\ 0 & 0\end{bmatrix},$$
where $V_2^*\widehat{H}_{11}V_2\in\C^{n_1,n_1}$ is Hermitian positive definite, and
$$U_2^*U_1^*SU_1U_2=\begin{bmatrix} V_2^*\widehat{S}_{11}V_2 & V_2^*\widehat{S}_{12}W_2\\ W_2^*\widehat{S}_{21}V_2 & W_2^*\widehat{S}_{22}W_2\end{bmatrix}=
\begin{bmatrix}\widetilde{S}_{11} & \widetilde{S}_{12} & 0\\
\widetilde{S}_{21} & \widetilde{S}_{22} & \widetilde{S}_{23}\\
0 &\widetilde{S}_{32} & \widetilde{S}_{33}\end{bmatrix}$$
where $\widetilde{S}_{21}=[\Sigma_{21}\;0]$. If $\widetilde{S}_{32}=0$ or $\widetilde{S}_{32}=[\,]$, we are done. Otherwise we continue inductively with the singular value decomposition of $\widetilde{S}_{32}$, and after finitely many steps we obtain a decomposition of the required form.
\end{proof}

If for a given matrix $A=H+S$ the transformation to the staircase form \eqref{eq:stcform} is known, then the equivalent linear algebraic system $(U^*AU)(U^*x)=U^*b$ can be solved using block Gaussian elimination. This amounts to solving a sequence of linear algebraic systems having successive Schur complements as their coefficient matrices. Let us have a closer look at this process.

For simplicity of notation, we set $\widehat{A}_{11}=H_{11}+S_{11}$. By construction, this matrix is (non-Hermitian) positive definite. The set of (non-Hermitian) positive definite matrices is closed under inversion; see, e.g.,~\cite[p.~10]{Joh72}. Hence $\widehat{A}_{11}^{-1}$ exists and is also positive definite. Then in the simplest nontrivial case of the staircase form (namely, $r=3$) we can write
\begin{align*}
U^*AU &=\begin{bmatrix}\widehat{A}_{11} & S_{12} & 0\\ S_{21} & S_{22} & 0 \\ 0 & 0 & S_{33}
\end{bmatrix}\\
&=\begin{bmatrix}I & 0 & 0\\ S_{21}\widehat{A}_{11}^{-1} & I & 0 \\ 0 & 0 & I
\end{bmatrix}
\begin{bmatrix}\widehat{A}_{11} & 0 & 0\\ 0 & \mathcal{S}_1 & 0 \\ 0 & 0 & S_{33}
\end{bmatrix}
\begin{bmatrix}I & \widehat{A}_{11}^{-1} S_{12} & 0\\ 0 & I & 0 \\ 0 & 0 & I
\end{bmatrix},
\end{align*}
where $\mathcal{S}_1=S_{22}-S_{21}\widehat{A}_{11}^{-1}S_{12}$ is the Schur complement of $\widehat{A}_{11}$ in the top $2\times 2$ block. Note that the inverses of the first and third matrix in the above factorization of $U^*AU$  are obtained by simply negating the off-diagonal blocks.

Since $\widehat{A}_{11}^{-1}$ is positive definite, this matrix can be written as
\[
\widehat{A}_{11}^{-1}=\widehat{H}_{11}+\widehat{S}_{11}
\]
for some matrices  $\widehat{H}_{11}=\widehat{H}_{11}^*>0$ and $\widehat{S}_{11}=-\widehat{S}_{11}^*$. The Schur complement then is of the form
\begin{align*}
\mathcal{S}_1&=S_{22}-S_{21}\widehat{A}_{11}^{-1}S_{12}=S_{22}-S_{21}(\widehat{H}_{11}
+\widehat{S}_{11})S_{12}\\
&= (S_{21}\widehat{H}_{11}S_{21}^*)+(S_{22}+S_{21}\widehat{S}_{11}S_{21}^*),
\end{align*}
where we have used that $S_{12}=-S_{21}^*$. The Hermitian part of the Schur complement is given by
\[
S_{21}\widehat{H}_{11}S_{21}^*=\Sigma_{21}\,[I\;0]\widehat{H}_{11}\begin{bmatrix}I\\ 0\end{bmatrix}\Sigma_{21}^*.
\]
By the Cauchy interlacing theorem, the eigenvalues of $[I\;0]\widehat{H}_{11}\begin{bmatrix}I\\ 0\end{bmatrix}$ strictly interlace the eigenvalues of $\widehat{H}_{11}$. Consequently this matrix, and thus the Hermitian part and by definition $\mathcal{S}_1$ are positive definite.

Suppose that we have a further block row in the staircase form, i.e., $r=4$. Then we can write
\begin{align*}
U^*AU =\begin{bmatrix}I & 0 & 0 & 0\\ S_{21}\widehat{A}_{11}^{-1} & I & 0 & 0\\ 0 & 0 & I &0
\\ 0 & 0 & 0 & I
\end{bmatrix}
\begin{bmatrix}\widehat{A}_{11} & 0 & 0 & 0\\ 0 & \mathcal{S}_1 & S_{23} & 0\\ 0 & S_{32} & S_{33} & 0\\
0 & 0 & 0 & S_{44}
\end{bmatrix}
\begin{bmatrix}I & \widehat{A}_{11}^{-1} S_{12} & 0 & 0\\ 0 & I & 0 & 0 \\ 0 & 0 & I & 0\\ 0 & 0 & 0 & I
\end{bmatrix},
\end{align*}
where the Schur complement $\mathcal{S}_1$ is positive definite. Using the same idea as above then gives
another Schur complement $\mathcal{S}_2=S_{33}-S_{32}\mathcal{S}_1^{-1}S_{23}$, which again is positive definite. Using this block Gaussian elimination procedure inductively we obtain the following result.

\begin{lemma}\label{lem:schur}
In the notation of Lemma~\ref{lem:sts}, the matrix $U^*AU$ can be transformed via Schur complement reduction into the block diagonal form
$$\begin{bmatrix}
\widehat{A}_{11} & &  & &  \\
 & \mathcal{S}_1 &   &  &  \\
&  & \ddots  &  &  \\
& &  & \mathcal{S}_{r-2} & \\
 & &  & & S_{r,r}
\end{bmatrix},$$
where $\widehat{A}_{11}=H_{11}+S_{11}$ and the Schur complements $\mathcal{S}_1,\dots,\mathcal{S}_{r-2}$ are positive definite. Moreover, the skew-Hermitian $S_{r,r}$ may not be always present.
\end{lemma}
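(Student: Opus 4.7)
The proof will go by induction on the number $r$ of blocks in the staircase form of Lemma~\ref{lem:sts}. The base case $r=2$ is immediate: when $H$ is nonsingular we have $U^*AU=\widehat{A}_{11}$ and nothing has to be reduced; when $H$ is singular the chain of nonsingular sub-super-diagonals $\Sigma_{i,i-1}$ (indexed by $i=2,\dots,r-1$) is empty, so $U^*AU=\mathrm{diag}(\widehat{A}_{11},S_{2,2})$ with $S_{2,2}=S_{r,r}$ is already in the claimed form. The cases $r=3$ and $r=4$ have essentially been written out in the text.

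For the inductive step I perform exactly one step of block Gaussian elimination on the leading $2\times 2$ block, as displayed in the text. Since $\widehat{A}_{11}=H_{11}+S_{11}$ has positive definite Hermitian part $H_{11}$ it is (non-Hermitian) positive definite, hence invertible, and the elimination factors $U^*AU$ as a product of a block lower-triangular, a block diagonal, and a block upper-triangular matrix whose middle factor is $\mathrm{diag}(\widehat{A}_{11},A')$. Here $A'$ is obtained from $U^*AU$ by replacing its $(2,2)$ block $S_{22}$ by $\mathcal{S}_1=S_{22}-S_{21}\widehat{A}_{11}^{-1}S_{12}$ and leaving every other block with row and column index $\geq 2$ untouched. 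Consequently $A'$ has staircase length $r-1$ in the sense of Lemma~\ref{lem:sts}, the nonsingular sub-super-diagonals $\Sigma_{i,i-1}$ for $i=3,\dots,r-1$ are preserved, and the decoupled tail $S_{r,r}$ is carried along verbatim. The only nontrivial point needed to apply the induction hypothesis to $A'$ is that $\mathcal{S}_1$ has positive definite Hermitian part, and this is precisely the Cauchy-interlacing calculation already carried out in the paragraph preceding the lemma, using the splitting $\widehat{A}_{11}^{-1}=\widehat{H}_{11}+\widehat{S}_{11}$ with $\widehat{H}_{11}>0$ and the fact that $S_{21}=[\Sigma_{21}\;0]$ with $\Sigma_{21}$ nonsingular.

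The main obstacle is purely structural: one must check that a single Schur step does not spoil any of the finer features of the staircase form at the next level — the nonsingular pattern of the $\Sigma_{i,i-1}$, the location of the decoupled tail $S_{r,r}$, and most importantly the fact that $\mathcal{S}_1$ is again of the form (positive definite Hermitian) $+$ (skew-Hermitian) so that $\mathcal{S}_1^{-1}$ admits an analogous splitting. The first two are clear because block Gaussian elimination on the leading block alters only the $(2,2)$ entry of the reduced matrix. The third follows from the closure of (non-Hermitian) positive definite matrices under inversion cited in the text via~\cite[p.~10]{Joh72}. Once these observations are in place, the induction closes and the block diagonal form $\mathrm{diag}(\widehat{A}_{11},\mathcal{S}_1,\dots,\mathcal{S}_{r-2},S_{r,r})$ is produced, with the Schur complements all positive definite and with the last block $S_{r,r}$ possibly absent when $n_r=0$.
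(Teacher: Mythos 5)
Your proof is correct and follows essentially the same route as the paper: the paper also carries out block Gaussian elimination on the leading block, establishes positive definiteness of each Schur complement via the splitting $\widehat{A}_{11}^{-1}=\widehat{H}_{11}+\widehat{S}_{11}$ and the Cauchy interlacing (compression) argument, and then invokes induction after writing out the cases $r=3$ and $r=4$. Your only addition is to make the induction explicit (base case $r=2$ and the check that one Schur step alters nothing but the $(2,2)$ block, so the staircase structure of length $r-1$ persists), which is exactly the step the paper leaves implicit.
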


Lemma~\ref{lem:schur} shows that the successive formation of Schur complements leads a block diagonal matrix with all but the last block being positive definite, so that the nullspace can be obtained just from the last block.
\begin{example}\label{ex:Stokes}{\rm Consider Example~\ref{ex:stokeswostab} with
\[
E=\begin{bmatrix} M_{11} & M_{12} & 0 \\ M_{21} & M_{22} & 0 \\ 0& 0 & 0 \end{bmatrix},\quad
J=\begin{bmatrix} 0 & 0 & \Sigma \\ 0 & 0 & 0 \\ -\Sigma & 0 & 0 \end{bmatrix}, \quad
R=\begin{bmatrix} -A_{11} & -A_{12} & 0 \\ -A_{21} & -A_{22} & 0 \\ 0 & 0 & 0 \end{bmatrix}.
\]
Then $A=H+S$ (see~\eqref{eqn:dHDAE-basic}--\eqref{eqn:dHDAE-lin-sys}) is already in the staircase form (\ref{eq:stcform}) with
\[ H_{11}=\begin{bmatrix} M_{11} & M_{12}  \\ M_{21} & M_{22} \end{bmatrix}+\frac{\tau}{ 2}
\begin{bmatrix} -A_{11} & -A_{12}  \\ -A_{21} & -A_{22} \end{bmatrix},\quad
S=\begin{bmatrix}S_{11} & S_{12}\\S_{21} & S_{22}\end{bmatrix}=\left[\begin{array}{cc|c} 0 & 0 & -\frac{\tau}{2} \Sigma\\
0 & 0 & 0\\\hline \frac{\tau}{2}\Sigma & 0 & 0\end{array}\right].
\]
In the notation of Lemma~\ref{lem:sts} we have $r=3$, and $n_1\geq n_2>0=n_3$. In order to obtain the block diagonal form of Lemma~\ref{lem:schur} we have to form the Schur complement
$$\mathcal {S}_{1}= \frac{\tau^2}{ 4}[\Sigma\;\; 0]\, H_{11}^{-1}\begin{bmatrix} \Sigma \\ 0 \end{bmatrix},$$
which is positive definite.}
\end{example}

The following corollary follows immediately from Lemma~\ref{lem:schur} and the fact that the Schur complement of a skew-Hermitian matrix is again skew-Hermitian.

\begin{corollary} \label{cor:corschur}
Every Schur complement of a matrix with positive semidefinite
Hermitian part is again a matrix with this property.
\end{corollary}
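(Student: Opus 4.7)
My plan is a short test-vector argument. Write $A = H + S \in \C^{n,n}$ with $H = H^* \geq 0$ and $S = -S^*$, partition $A = \begin{bmatrix} A_{11} & A_{12} \\ A_{21} & A_{22}\end{bmatrix}$, and assume---as is necessary for the Schur complement to be defined---that $A_{11}$ is invertible. The goal is to show that the Hermitian part of the Schur complement $A/A_{11} := A_{22} - A_{21} A_{11}^{-1} A_{12}$ is positive semidefinite.

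The first step would be to exhibit the Schur complement as a restriction of the quadratic form of $A$. For any vector $y$ of the size of $A_{22}$, I would set $v := \begin{bmatrix} -A_{11}^{-1} A_{12} y \\ y \end{bmatrix}$. A direct multiplication gives $Av = \begin{bmatrix} 0 \\ (A/A_{11}) y \end{bmatrix}$, and hence $v^* A v = y^* (A/A_{11}) y$. This algebraic identity is essentially the only calculation the proof requires.

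With this in hand the conclusion is immediate: splitting $A = H + S$, the skew-Hermitian piece contributes $v^* S v \in i\R$, so $\mathrm{Re}(v^* A v) = v^* H v \geq 0$. Hence $\mathrm{Re}(y^* (A/A_{11}) y) \geq 0$ for every $y$, which is exactly the statement that the Hermitian part of $A/A_{11}$ is positive semidefinite. There is no real obstacle here---the whole proof reduces to the one-line identity above plus the elementary observation that skew-Hermitian quadratic forms are purely imaginary. A second route, closer to the hint preceding the corollary, would combine the positive-definite Schur-complement formula already worked out in the proof of Lemma \ref{lem:schur} with the closure of the skew-Hermitian class under Schur complementation; but the test-vector argument avoids invoking the staircase structure and is considerably shorter.
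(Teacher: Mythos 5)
Your proof is correct, and it takes a genuinely different route from the paper. The paper obtains the corollary as an immediate consequence of the block elimination carried out for Lemma~\ref{lem:schur}: there one inverts the positive definite leading block, splits $\widehat{A}_{11}^{-1}=\widehat{H}_{11}+\widehat{S}_{11}$ into Hermitian and skew-Hermitian parts (using closure of positive definite matrices under inversion), and reads off the Schur complement as the sum of a Hermitian congruence term $S_{21}\widehat{H}_{11}S_{21}^*$ and a skew-Hermitian remainder $S_{22}+S_{21}\widehat{S}_{11}S_{21}^*$; the corollary then follows from this decomposition together with the fact that Schur complements of skew-Hermitian matrices are skew-Hermitian. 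You instead use the classical test-vector argument: the identity $v^*Av=y^*(A/A_{11})y$ for $v=\bigl[\,(-A_{11}^{-1}A_{12}y)^*\;\;y^*\,\bigr]^*$, combined with $\mathrm{Re}(v^*Av)=v^*Hv\geq 0$. Your argument is shorter and more general -- it needs only that $A_{11}$ is invertible, makes no use of the staircase structure, and in particular does not rely on the relation $A_{12}=-A_{21}^*$ that holds in the setting of Lemma~\ref{lem:sts} (where the off-diagonal blocks of $H$ vanish), so it establishes the corollary exactly in the generality in which it is stated. What the paper's route buys in exchange is an explicit additive splitting of the Schur complement into its Hermitian and skew-Hermitian parts, which is what allows the sharper conclusion of Lemma~\ref{lem:schur} (positive \emph{definiteness} of the successive Schur complements, via the Cauchy interlacing argument applied to the congruence term), information that the purely variational argument does not provide.
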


In \cite{SogZ18} a similar result is shown for symmetric multiple saddle point problems in block tridiagonal form, that is to say, all consecutive Schur complements are positive definite, given that the most upper-left block is positive definite.

\section{Iterative methods for the linear algebraic systems}\label{sec:solvers}
	
In this section we discuss iterative methods for linear algebraic systems of the form $Ax=b$ with $A=H+S$.

A widely known method in this context is the HSS iteration, which was introduced in~\cite{BaiGN03}.
Given an initial vector $x^{(0)}$ and some constant $\alpha>0$, the HSS iteration successively solves linear algebraic systems with the (shifted) Hermitian and skew-Hermitian parts of $A$ by computing
\begin{align*}
(\alpha I+H) x^{(k+\frac12)} &= (\alpha I-S)x^{(k)}+b,\\
(\alpha I+S) x^{(k+1)} &= (\alpha I-H)x^{(k+\frac12)}+b,
\end{align*}
for $k=1,2,\dots$. There are numerous variants and extensions of the HSS iteration; see, e.g.,~\cite{BaiBC10,BaiG07,BaiGN03,BaiGP04,Ben09,LiYW14} or~\cite[Section~10.3]{BenGL05} for a summary of some results.
%The main advantage of this approach is that linear algebraic systems with the Hermitian matrix $\alpha I+H$ and the shifted skew-Hermitian matrix $\alpha I+S$ can be solved by using Krylov subspace methods based on three-term recurrences. For Hermitian matrices this is well known, and an overview of conjugate gradient type methods for shifted skew-Hermitian (or rather shifted skew-symmetric) matrices is given in~\cite{SzyWid93}. Alternatively, one can compute the ``inner solves'' of the HSS iteration using (inexact) decompositions of these matrices, which due to their structures can be more efficient than decomposing a general non-Hermitian matrix~$A$.
As shown in~\cite[Theorem~2.2]{BaiGN03}, the HSS iteration with exact ``inner solves'' with $\alpha I+H$ and $\alpha I+S$ converges for every $\alpha>0$, provided that $H$ (and hence $A$) is positive definite. %Moreover, we can apply the original CG method~\cite{HesSti52} to solve systems with the HPD matrix $\alpha I+H$. Thus, positive definiteness of~$A$ not only guarantees convergence, but also has algorithmic advantages.
However, in~\cite{BenG04} it was noted that the convergence speed of the HSS iteration is usually too slow to be competitive with other iterative methods, even when $\alpha$ is chosen optimally (in the sense that it minimizes the spectral radius of the iteration matrix).  Therefore the HSS iteration is recommended to be used as a preconditioner rather than as an iterative solver.

We will here focus on another approach, introduced in~\cite{Wid78} (also see the earlier paper~\cite{ConGol76}), which suggests to solve, instead of $Ax=b$ with $A=H+S$, the equivalent system
\begin{equation}\label{eqn:IpKsystem}
(I+K)x=\hat{b},\quad \mbox{where}\quad K=H^{-1}S,\quad \hat{b}=H^{-1}b.
\end{equation}
This transformation can be interpreted as a preconditioning of the original system with its Hermitian part, which of course requires that $H$ is nonsingular. If we again assume that $H$ is positive definite, then this matrix defines the $H$-inner product $\langle x,y\rangle_H=y^*Hx$. The adjoint of $K$ with respect to the $H$-inner product, or simply the $H$-adjoint, is given by
$$H^{-1}K^*H=H^{-1}(S^*H^{-1})H=-K,$$
hence the matrix $K$ is \emph{$H$-normal}(1), which is a necessary and sufficient condition for $K$ to admit an optimal three-term recurrence for generating an $H$-orthogonal basis of the Krylov subspaces $\mathcal{K}_k(K,v)$ for each initial vector $v$; see~\cite[Theorem~4.6.2]{LieS13}. (Note that, in addition, this implies that $K$ is diagonalizable and its eigenvalues are purely imaginary.)
This fact can be used for constructing Krylov subspace methods based on three-term recurrences for solving the system \eqref{eqn:IpKsystem}. The method of~\cite{Wid78} and a minimal residual method of~\cite{Rap78} are early and important examples. They appear to be neither widely known nor thoroughly studied, with~\cite{SzyWid93} being one of the few survey papers that discuss both methods in some detail. We will therefore summarize the most important facts about their implementation and mathematical properties here.

We first note that in matrix terms the three-term recurrence for generating an $H$-orthogonal basis of $\mathcal{K}_k(K,\hat{b})$ yields a \emph{Lanczos relation} of the form
\begin{equation}\label{eq:wid_lanczos}
KV_k=V_{k+1}T_{k+1,k},
\end{equation}
where $ {\rm Span}(V_k)=\mathcal{K}_k(K,\hat{b})$, $V_k^*HV_k=I_k$, and $T_{k+1,k}$ is tridiagonal and skew-Hermitian. Note that $V_k^*SV_k=V_k^*HV_{k+1}T_{k+1,k}=T_{k,k}$.

\subsection{Widlund's method} The method of Widlund~\cite{Wid78} is an oblique projection method with iterates $x_k^{W}$ determined by
\begin{equation*}%\label{eq:wid_ort}
	x_k^W  \in \mathcal{K}_k(K,\hat{b}) \quad\text{such that}\quad
	r_k^W = b - A x_k^W  \perp \mathcal{K}_k(K,\hat{b}).
\end{equation*}
Using the Lanczos relation \eqref{eq:wid_lanczos}, we have $x_k^W = V_k y_k$ for some vector $y_k$ that is computed using the orthogonality property, i.e.,
\begin{align*}
0 &= V_k^* r_k^W = V_k^* (b - (H+S) V_ky_k) = V_k^* H \hat{b} - (V_k^* H V_k + V_k^* S V_k) y_k \nonumber\\
		&= \|{\hat{b}}\|_H e_1 - (I_k + T_{k,k}) y_k. %\label{eq:wid_sol}
\end{align*}
The system $(I_k+T_{k,k})y_k=\|{\hat{b}}\|_H e_1$ with the $k\times k$ skew-Hermitian matrix $I_k+T_{k,k}$
can be solved efficiently.

In \cite{Eise83,HagLukYou80,SzyWid93} optimality properties are shown for the even and odd subsequences $\{x_{2k}^W\}$ and $\{x_{2k+1}^W\}$, namely that
\[
\norm{x-x_{2k}^W}_H = \min_{z \in (I-K)\mathcal{K}_{2k}(K,\hat{b})} \norm{x-z}_H,
\]
and similarly for the odd subsequence. The eigenvalues of $K$ are purely imaginary. Let $i[-\lambda,\lambda]$ for some $\lambda>0$ be the smallest interval that contains these eigenvalues. Then, similar to the CG method~\cite{HesSti52}, the optimality property of Widlund's method leads to an error bound of the form
\begin{equation}\label{eqn:CGW-bound}
\frac{\norm{x-x_{2k}^W}_H}{\norm{x}_H}\leq 2 \left(\frac{\sqrt{1+\lambda^2}-1}{\sqrt{1+\lambda^2}+1}\right)^k,
\end{equation}
and the same bound holds for the sequence $\norm{x-x_{2k+1}^W}_H/\norm{x-x_{1}^W}_H$; see~\cite{Eise83} or~\cite[Theorem~4.2]{SzyWid93}. The bound indicates that a ``fast'' convergence of the method can be expected when $\lambda>0$ is ``small''.
	
\subsection{Rapoport's method} The method of Rapoport~\cite{Rap78} is a minimal residual method with iterates $x_k^R$ determined by
\begin{equation} \label{rap_ort}
x_k^R \in \mathcal{K}_k(K,\hat{b}) \quad\text{such that}\quad
r_k^R = b - A x_k^R \perp (I+K)\mathcal{K}_k(K,\hat{b}).
\end{equation}
Since the Lanczos relation \eqref{eq:wid_lanczos} can be written as
\[
(I+K)V_k = V_{k+1}\begin{bmatrix}
		I_k+T_{k,k} \\ t_{k+1,k}e_k^*
	\end{bmatrix}\equiv V_{k+1}\tilde{T}_{k+1,k},
\]
we obtain $x_k^R = V_k y_k$ for some vector $y_k$ determined by the orthogonality property, i.e.,
\begin{align*}
		0 &= ((I+K)V_k)^* r_k^R = \tilde{T}_{k+1,k}^*V_{k+1}^* H (\hat{b} - (I+K) V_ky_k) \\
		&= \|{\hat{b}}\|_H ~\tilde{T}_{k+1,k}^* e_1 -  \tilde{T}_{k+1,k}^*  \tilde{T}_{k+1,k} y_k.
\end{align*}
Equivalently, $y_k$ is the solution of the least squares problem
$$\min_y \|\|\hat{b}\|_H e_1-\tilde{T}_{k+1,k}y\|_2,$$
which can again be solved efficiently, since $\tilde{T}_{k+1,k}$ is tridiagonal.
	
Since $A=H(I+K)$, we have $r_k^R=H(I+K)(x-x_k^R)$, and we can write the orthogonality property
in \eqref{rap_ort} as
\[
x-x_k^R \perp_{B} \mathcal{K}_k(I+K,\hat{b}) =\mathcal{K}_k(K,\hat{b}),
\]
where $B\equiv (I+K)^*H(I+K)$ is Hermitian positive definite. Since $x_k^R \in \mathcal{K}_k(K,\hat{b})$,
this is mathematically equivalent to the optimality property
\[
\norm{x-x_k^R}_{B}=\min_{z\in \mathcal{K}_k(K,\hat{b})}\norm{x-z}_{B};
\]
see~\cite[Theorem~2.3.2]{LieS13}. We thus obtain
\begin{align}
		\norm{b-Ax_k^R}_{H^{-1}} &= \norm{H^{-1}(b-Ax_k^R)}_H =\norm{(I+K)(x-x_k^R)}_H \nonumber\\
		&= \norm{x-x_k^R}_{B}= \min_{z\in \mathcal{K}_k(K,\hat{b})}\norm{x-z}_{B}\nonumber\\
		&= \min_{z\in \mathcal{K}_k(K,\hat{b})}\|\hat{b}-(I+K)z\|_{H}
        =\min_{z\in \mathcal{K}_k(I+K,\hat{b})}\|\hat{b}-(I+K)z\|_{H}\nonumber\\
		&=\min_{p(0)=1\atop {\rm deg}(p)\leq k}\|p(I+K)\hat{b}\|_{H}=
        \min_{p(0)=1\atop {\rm deg}(p)\leq k}\|H^{-1}p(AH^{-1})b\|_{H}\label{eqn:R-bd}\\
        &=\min_{p(0)=1\atop {\rm deg}(p)\leq k}\|p(AH^{-1})b\|_{H^{-1}},\nonumber
\end{align}
where we have used that $\mathcal{K}_k(K,\hat{b})=\mathcal{K}_k(I+K,\hat{b})$. The matrix $I+K$ is $H$-normal(1), and hence diagonalizable with an $H$-unitary matrix of eigenvectors, i.e., $I+K=Y\Lambda Y^{-1}$ and $Y^*HY=I$. Note that $H^{1/2}Y$ is unitary. Using the first expression in \eqref{eqn:R-bd} we thus obtain
\begin{align*}
\norm{b-Ax_k^R}_{H^{-1}} &= \min_{p(0)=1\atop {\rm deg}(p)\leq k}\|p(I+K)\hat{b}\|_{H}\\
&=\min_{p(0)=1\atop {\rm deg}(p)\leq k}\|H^{1/2}Yp(\Lambda)Y^{-1}H^{-1}b\|_{2}\\
&\leq \|(Y^{-1}H^{-1/2})H^{-1/2} b\|_{2}\,\min_{p(0)=1\atop {\rm deg}(p)\leq k}\|H^{1/2}Yp(\Lambda)\|_2\\
&=\|b\|_{H^{-1}}\,\min_{p(0)=1\atop {\rm deg}(p)\leq k}\|p(\Lambda)\|_2.
\end{align*}
The polynomial minimization problem on the spectrum of $I+K$, which is contained in a complex interval of the form $1+i[-\lambda,\lambda]$ for some $\lambda>0$, was considered in~\cite{FreRus86} (see also~\cite{Fre90}), and it leads to a convergence bound of the form
\begin{equation}\label{eqn:Rap-bd}
\frac{\norm{b-Ax_k^R}_{H^{-1}}}{\|b\|_{H^{-1}}}\leq 2\left(\frac{\lambda}{\sqrt{1+\lambda^2}+1}\right)^k;
\end{equation}
see~\cite[Theorem~4.3]{SzyWid93}. As for Widlund's method, this bound for Rapoport's method indicates that the convergence is ``fast'' when $\lambda>0$ is ``small''.

\subsection{Comparison with GMRES} We will now compare the methods of Widlund and Rapoport with GMRES~\cite{SaaSch86}. Recall that the GMRES method applied to $Ax=b$ and starting with $x_0=0$ has iterates $x_k^G$ that are determined by
\begin{equation*} %\label{gmres_ort}
x_k^G\in \mathcal{K}_k(A,b)\quad\mbox{such that}\quad r_k^{G}=b-Ax_k^G \perp A\mathcal{K}_k(A,b), \end{equation*}
and that the orthogonality property of the method is equivalent to the optimality property
\begin{equation*} %\label{gmres_opt}
\norm{r_k^{G}}_2 = \min_{z \in \mathcal{K}_k(A,b)} \norm{b-Az}_2=
\min_{p(0)=1\atop {\rm deg}(p)\leq k}\norm{p(A)b}_2.
\end{equation*}
Note that the GMRES method is well defined when $A$ is nonsingular, but in contrast to the methods of Widlund and Rapoport it is based on full rather than three-term recurrences.

Analogously, an application of GMRES with $x_0=0$ to the left-preconditioned system $H^{-1}Ax=\hat{b}$ has iterates $x_k^{LG}$ that are characterized by
\begin{equation*}%\label{eq:lgmres_ort}
x_k^{LG}
\in {\mathcal{K}}_k(H^{-1}A,\hat{b}) \quad \mbox{such that} \quad r_k^{LG}=\hat{b}-H^{-1}Ax_k^{LG}\perp H^{-1}A{\mathcal{K}}_k(H^{-1}A,\hat{b}).
\end{equation*}
This method is well defined when $H$ is nonsingular, but $H$ does not need to be definite.
Note that $\mathcal{K}_k(K,\hat{b})=\mathcal{K}_k(I+K,\hat{b})=\mathcal{K}_k(H^{-1}A,\hat{b})$, and hence GMRES applied to the left-preconditioned system uses the same search spaces for the iterates as the methods of Widlund and Rapoport. The optimality property now is given by
\begin{align}
\norm{r_k^{LG}}_2 &= \min_{z \in \mathcal{K}_k(H^{-1}A,\hat{b})} \|\hat{b}-H^{-1}Az\|_2=
\min_{p(0)=1\atop {\rm deg}(p)\leq k} \|p(H^{-1}A)\hat{b}\|_2 \label{eq:lgmres_opt}\\
&=\min_{p(0)=1\atop {\rm deg}(p)\leq k} \|H^{-1}p(AH^{-1})b\|_2
=\min_{p(0)=1\atop {\rm deg}(p)\leq k} \|p(AH^{-1})b\|_{H^{-2}}.\nonumber
\end{align}
If we again write $I+K=Y\Lambda Y^{-1}$, then the last expression in \eqref{eq:lgmres_opt} leads to
the bound
\begin{equation}\label{eqn:L-GMRES-bd}
\frac{\norm{r_k^{LG}}_2}{\|\hat{b}\|_2}\leq \kappa(Y) \min_{p(0)=1\atop {\rm deg}(p)\leq k} \|p(\Lambda)\|_2,
\end{equation}
which reminds one of the standard GMRES convergence bound for diagonalizable matrices, and where the minimization problem can be bounded as in \eqref{eqn:Rap-bd}. Moreover, we have
\begin{align*}
\|r_k^{LG}\|_2=\|\hat{b}-H^{-1}Ax_k^{LG}\|_2=\|H^{-1}(b-Ax_k^{LG})\|_2=
\|b-Ax_k^{LG}\|_{H^{-2}},
\end{align*}
where $b-Ax_k^{LG}$ can be considered the \emph{unpreconditioned residual} of the GMRES method applied to the left-preconditioned system.

Table~\ref{tabsum} contains an overview of the mathematical characterizations and optimality properties of the four methods discussed above, where L-GMRES means GMRES applied to the left preconditioned system.

We point out that several Krylov subspace methods with short recurrences have been proposed in the literature for the solution of linear algebraic systems with (shifted) skew-Hermitian or skew-symmetric matrices; see, e.g., the survey~\cite{SzyWid93} or the more recent papers~\cite{GrePaiTitVar16,GreVar09,IdeV07,Jia07}. Here we do not take these methods into account, since our matrix~$K$ is skew-Hermitian with respect to the $H$-inner product, so that methods for usual skew-Hermitian matrices are not directly applicable. Moreover, the methods of Widlund and Rapoport already implement the two most common projection principles in this context, namely oblique and orthogonal projection onto Krylov subspaces.

\begin{table}
\renewcommand{\arraystretch}{1.3}
\begin{tabular}{llll}
\multicolumn{4}{l}{\textbf{Mathematical characterization:}}\\
Widlund:	& $x_k^W \in \mathcal{K}_k(K,\hat{b}) $ & such that $r_k^W= b - A x_k^W  $ & $\perp  {\mathcal{K}}_k(K,\hat{b})$  \\
Rapoport: & $x_k^R \in {\mathcal{K}}_k(K,\hat{b}) $ & such that $r_k^R = b - A x_k^R $ & $\perp  H^{-1}A\mathcal{K}_k(K,\hat{b})$  \\
L-GMRES:	&    $x_k^{LG} \in \mathcal{K}_k(K,\hat{b}) $ & such that $ r_k^{LG}=\hat{b}-H^{-1}Ax_k^{LG} $ & $\perp  H^{-1}A\mathcal{K}_k(K,\hat{b})$  \\
GMRES:	&  $x_k^G \in \mathcal{K}_k(A,b) $ & such that $r_k^G= b - A x_k^G  $ & $\perp  A\mathcal{K}_k(A,b)$  \\[1ex]
\multicolumn{4}{l}{\textbf{Minimization properties:}}\\
Widlund:		 &  $\norm{x-x_{2k}^W}_H $ & $= \displaystyle \min_{z \in (I-K)\mathcal{K}_{2k}(K,\hat{b})} \norm{x-z}_H$   &    \\
Rapoport:	 &   $\norm{b-Ax_k^R}_{H^{-1}} $ & $= \displaystyle \min_{z \in \mathcal{K}_k(K,\hat{b})} \norm{b-Az}_{H^{-1}}  $ &   \hspace*{-1.3cm}  $=\displaystyle \min_{p(0)=1\atop {\rm deg}(p)\leq k}\norm{p(AH^{-1})b}_{H^{-1}}$ \\
L-GMRES: &    $\norm{b-Ax_k^{LG}}_{H^{-2}} $ & $= \displaystyle
\min_{z \in \mathcal{K}_k(K,\hat{b})} \|b-Az\|_{H^{-2}}$ & \hspace*{-1.3cm} $= \displaystyle\min_{p(0)=1\atop {\rm deg}(p)\leq k} \|p(AH^{-1})b\|_{H^{-2}}$\\
GMRES:	&   $\norm{b-Ax_k^G}_2 $ & $= \displaystyle \min_{z \in \mathcal{K}_k(A,b)} \norm{b-Az}_2 $  &  \hspace*{-1.3cm}
$= \displaystyle \min_{p(0)=1\atop {\rm deg}(p)\leq k}\norm{p(A)b}_2 $
\end{tabular}
\caption{Mathematical characterization and minimization properties of the different methods.}\label{tabsum}
\end{table}

\section{Numerical experiments}	\label{sec:tests}

In this section we present numerical experiments with the four iterative methods summarized in Table~\ref{tabsum} applied to linear algebraic systems of the form \eqref{eqn:dHDAE-lin-sys}, which come from different cases discussed in Section~\ref{sec:examples}. All experiments were carried out in MATLAB R2019b on a cluster with an AMD EPYC 7302 16-Core Processor and 512GB memory.

We have implemented the methods of Widlund and Rapoport in MATLAB as stated in~\cite{Wid78} and~\cite{Rap78}, respectively, and we use the MATLAB implementation of (preconditioned) GMRES. The methods of Widlund and Rapoport as well as L-GMRES are based on preconditioning the system~\eqref{eqn:dHDAE-lin-sys} with the Hermitian part $H$ of $A$, and hence they require solving a linear algebraic system with $H$ in every step. In large-scale problems one can compute a Cholesky decomposition of $H$, and then solve the two triangular systems in every step. We point out that for a sequence of linear algebraic systems coming from a discretization with constant time steps as in \eqref{eqn:dHDAE-basic}, only one Cholesky decomposition needs to be computed upfront. We will comment on our use of the Choelsky decomposition in the different examples below.
%The time spent for the Cholesky decomposition is included in the running times of these methods shown in Tables~\ref{table:Mbs_small}-\ref{table:nav}.

As seen in Table~\ref{tabsum}, the four methods minimize different norms of residual or error. We consider GMRES applied to the non-preconditioned system $Ax=b$ as the reference method. This method minimizes the 2-norm of the residual in every step, and therefore we compare the residuals of all four methods in this norm. In all experiments we start the iterative methods with $x_0=0$, and run the iterations until the relative residual 2-norm is smaller than a given tolerance.

\subsection{Multi-body system (Case 1)}\label{num:Mbs}

We consider the holonomically constrained damped mass-spring system illustrated
in Figure \ref{fig:mbs} (taken from~\cite[Fig.~3.4]{MehS05}). The $i$th mass of weight $m_i$ is connected to the $(i + 1)$st mass by a spring and a damper with constants $k_i$ and $d_i$, respectively, and also to the ground by a spring and a damper with constants $\kappa_i$ and $\delta_i$ respectively. Additionally, the first mass is connected to the last one by a rigid bar and it is influenced by a control.

\begin{figure}[h]
	\centering
	\includegraphics[width=0.7\linewidth]{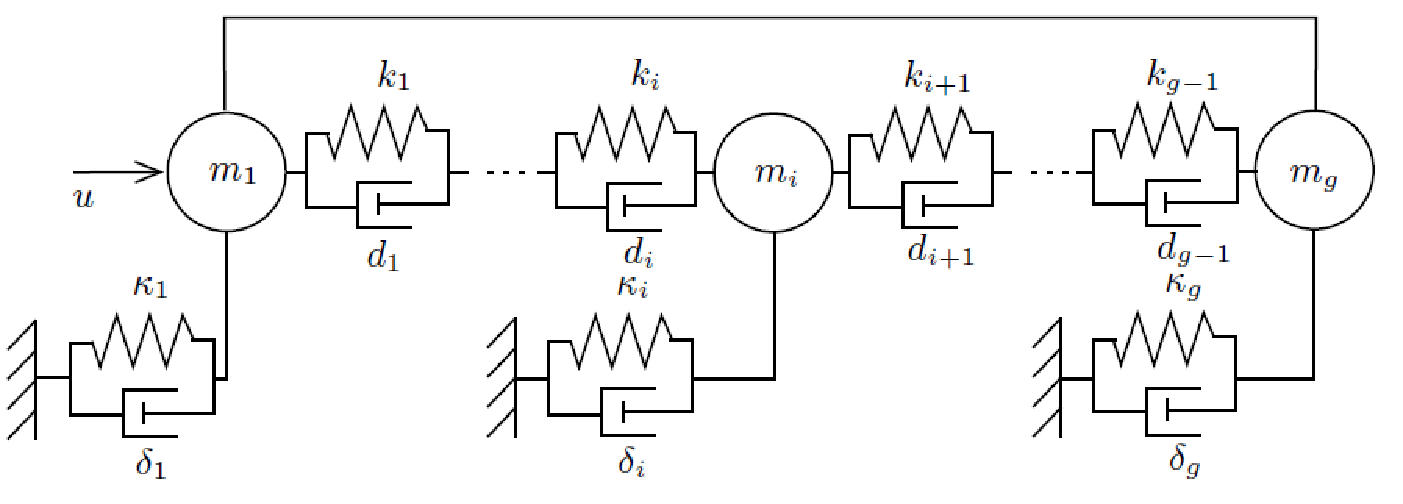}
	\caption{A damped mass-spring system with a holonomic constraint.}
	\label{fig:mbs}
\end{figure}

The vibration of this system is described by a descriptor system (see~\cite[equation~(34)]{MehS05}), from which one obtains an equation of the form
$$M\ddot{p}+D\dot{p}+Fp=f,$$
where $M=\diag(m_1, \dots, m_g)>0$ is the mass matrix, and $D=D^*\geq 0$ and $F=F^*>0$ are the tridiagonal damping and stiffness matrices, respectively. The resulting first-order formulation gives the state equation of a dHDAE system of the form \eqref{eq_brake_dae}. After time discretization we obtain a linear algebraic system of the form \eqref{eqn:dHDAE-lin-sys} with $A$ of order $n=2\cdot g$, and the positive definite Hermitian part
$$H=E+\frac{\tau}{2} R=\begin{bmatrix} M +\frac{\tau}{2} D& 0\\0 & F\end{bmatrix}.$$

We consider linear algebraic systems for two different values of $g$, namely $g=5 \times 10^{3}$ and $g=10^{6}$, and in each case we use time steps of four different orders of magnitude, namely $\tau/2=10^{-4},\,10^{-3},\,10^{-2},\,10^{-1}$. The right hand sides of the linear algebraic systems are generated by the command {\tt randn} in MATLAB.  In each case we compute one Cholesky decomposition of $H$ using MATLAB's {\tt chol} function, and then solve the two triangular systems in every iterative step of the methods of Widlund and Rapoport, and L-GMRES with MATLAB's backslash operator. The time for computing the Cholesky decomposition is $0.0009$s and $0.2070$s for $g=5 \times 10^{3}$ and $g=10^{6}$, respectively, and is included in the running times of the methods of Widlund, Rapoport, and L-GMRES shown in Tables~\ref{table:Mbs-small} and~\ref{table:Mbs-large}. In the computations with the iterative methods the tolerance for the relative residual norm is $10^{-12}$. In Figures~\ref{fig:Mbs-small} and~\ref{fig:Mbs-large} we plot the convergence curves of the four iterative methods for $g=5 \times 10^{3}$ and $g=10^6$, respectively.

The tables and figures show that GMRES is outperformed in terms of time and iterative steps by the methods of Widlund, Rapoport and L-GMRES. This is not surprising, since the latter three methods are all preconditioned by the dominant Hermitian part. Moreover, in each case these three methods take approximately the same number of steps to reach the stopping criterion. Because of the full recurrences, L-GMRES in each case takes a (slightly) longer time than the methods of Widlund and Rapoport.

We also observe that the number of steps required by the methods of Widlund, Rapoport, and L-GMRES to reach the stopping criterion increases with increasing $\tau$. A heuristic explanation of this observation is that with increasing $\tau$ the Hermitian part becomes ``less dominant'', so that the gain of using it as a preconditioner, and hence the advantage over (unpreconditioned) GMRES, becomes less pronounced. A more analytic (though still not complete) explanation is given by the convergence bounds \eqref{eqn:CGW-bound}, \eqref{eqn:Rap-bd}, and \eqref{eqn:L-GMRES-bd}. For $g=5\times 10^3$ the smallest (purely imaginary) interval $i[-\lambda,\lambda]$ containing these eigenvalues is given by
\begin{align*}
\mbox{$\lambda\approx 3.1622 \times 10^{-5}$ for $ \tau/2=10^{-4}$,}\\
\mbox{$\lambda\approx 3.1619 \times 10^{-4}$ for $\tau/2=10^{-3}$,}\\
\mbox{$\lambda\approx 3.1583 \times 10^{-3}$ for $ \tau/2=10^{-2}$,}\\
\mbox{$\lambda\approx 3.1235 \times 10^{-2}$ for $ \tau/2=10^{-1}$.}
\end{align*}
Thus, decreasing $\tau$ by a factor of 10 means that the spectrum of $K$ ``shrinks'' by the same factor. The faster convergence for smaller $\tau$ is indicated by the convergence bounds, which say that the convergence of the three methods is ``fast'' when $\lambda>0$ is ``small''. (The same behavior and conclusion hold for $g=10^6$, but in that case we did not compute the eigenvalues of the matrix~$K$.)

\begin{table}
	\begin{tabular*}{\textwidth}{@{\extracolsep{\fill}}l rr rr rr rr}
		& \multicolumn{2}{c}{$\tau/2=10^{-4}$} & \multicolumn{2}{c}{$\tau/2=10^{-3}$} & \multicolumn{2}{c}{$\tau/2=10^{-2}$} & \multicolumn{2}{c}{$ \tau/2=10^{-1}$}\\				
		\hline
		Method       &  	 Time  & Iter.	 &  	 Time  & Iter.	& Time  & Iter.  & Time  & Iter. \\
		\hline
		Widlund	&  0.003 	&    3   &    0.005 	&    4 & 0.004 & 5& 0.003 & 7\\
		Rapoport &  0.002 	 &  2 	  & 0.004 	 &   3  & 0.008 & 4 & 0.005 & 6\\
		L-GMRES	&  0.028 &	    3	 &  0.014 	 &   4 & 0.016 & 5 & 0.010 &7\\
		GMRES	& 0.032 	&	   35  &   0.029 	 &  37 & 0.088 & 39 & 0.030 & 44
		
	\end{tabular*}
	\centering
	\caption{Multi-body system. Running times and iteration numbers for $g=5 \times 10^{3}$.} \label{table:Mbs-small}
\end{table}

\begin{table}
	\begin{tabular*}{\textwidth}{@{\extracolsep{\fill}}l rr rr rr rr}
		& \multicolumn{2}{c}{$\tau/2=10^{-4}$} & \multicolumn{2}{c}{$\tau/2=10^{-3}$} & \multicolumn{2}{c}{$\tau/2=10^{-2}$} & \multicolumn{2}{c}{$ \tau/2=10^{-1}$}\\				
		\hline
		Method       &  	 Time  & Iter.	 &  	 Time  & Iter.	&  	 Time  & Iter. &  	 Time  & Iter. \\
		\hline
		Widlund	&  	  0.458 & 3 &  0.533 &   4 & 0.687 & 6 & 0.930 & 9\\
		Rapoport & 0.334 	&   2 &   0.424 &   3  & 0.619 & 5 & 0.939 & 8\\
		L-GMRES & 0.720 & 3&	0.837 	&   4& 1.099 & 6 & 1.544 & 9\\
		GMRES	&  4.839 &   28 &   4.734 	&	28& 5.553 & 29 & 5.525 & 31
	\end{tabular*}
	\centering
	\caption{Multi-body system. Running times and iteration numbers for  $g=10^6$.} \label{table:Mbs-large}
\end{table}

\begin{figure}
	\centering
	\includegraphics[width=0.49\textwidth]{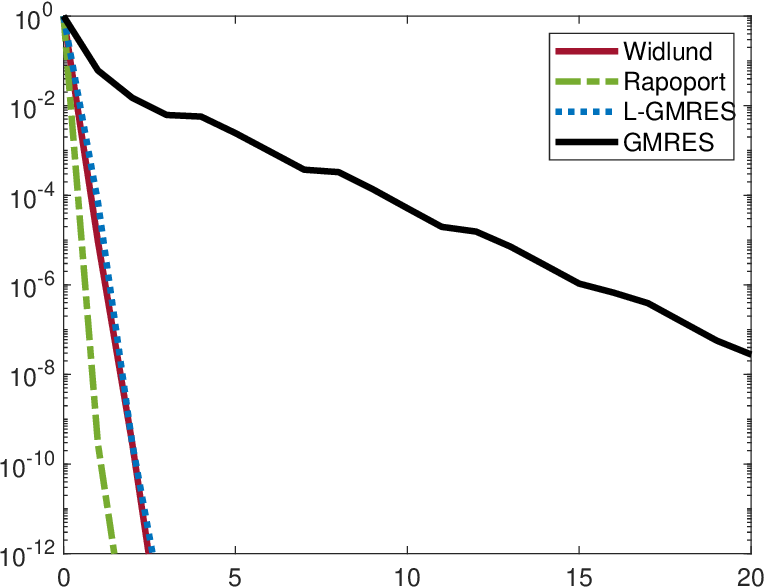}
	\includegraphics[width=0.49\textwidth]{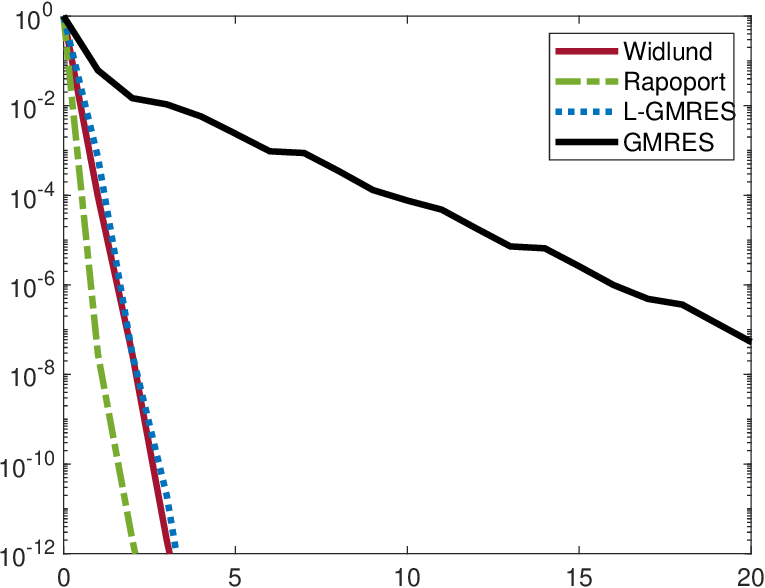}
	\includegraphics[width=0.49\textwidth]{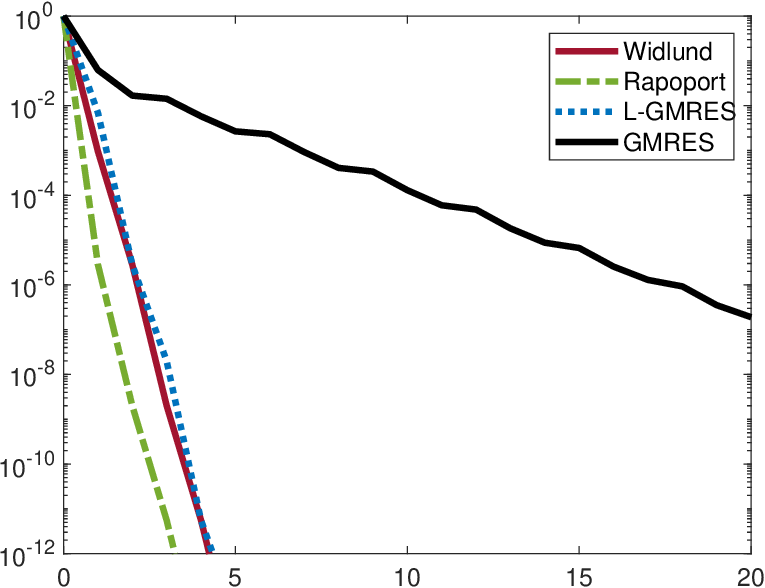}
	\includegraphics[width=0.49\textwidth]{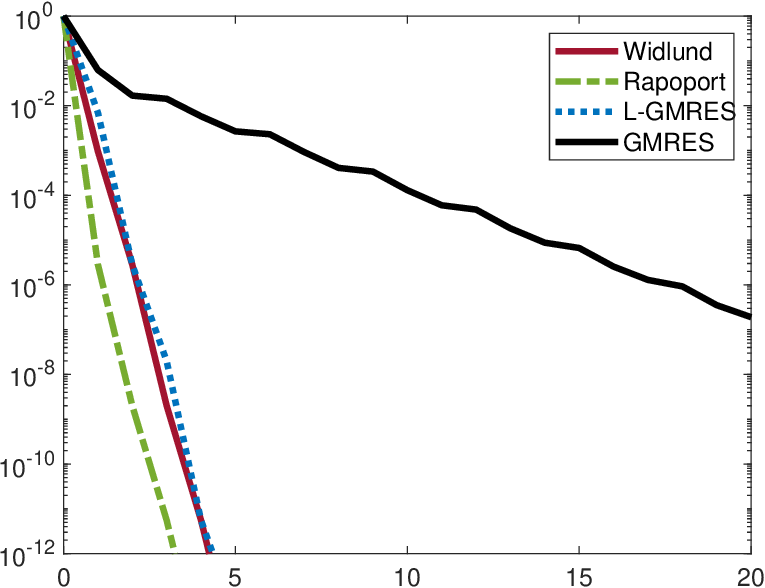}
	\caption{Multi-body system. Relative residual norms of the four methods with
    $ \tau/2=10^{-4},\,10^{-3},\,10^{-2},\,10^{-1}$ (top left to bottom right) for $g=5 \times 10^{3}$.}
	\label{fig:Mbs-small}
\end{figure}

\begin{figure}
	\centering
	\includegraphics[width=0.49\textwidth]{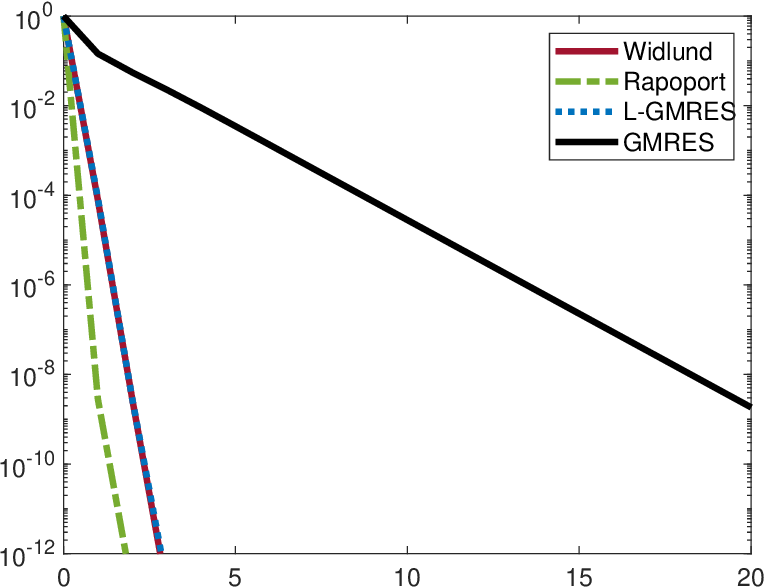}
	\includegraphics[width=0.49\textwidth]{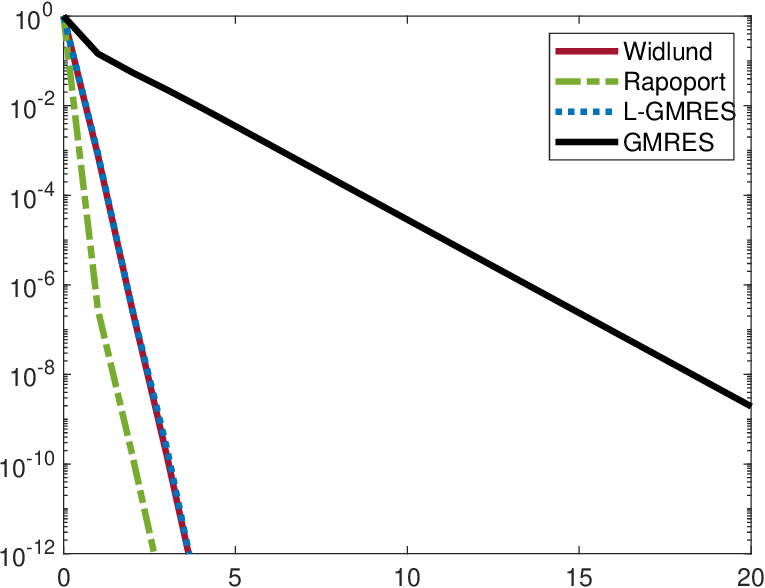}
	\includegraphics[width=0.49\textwidth]{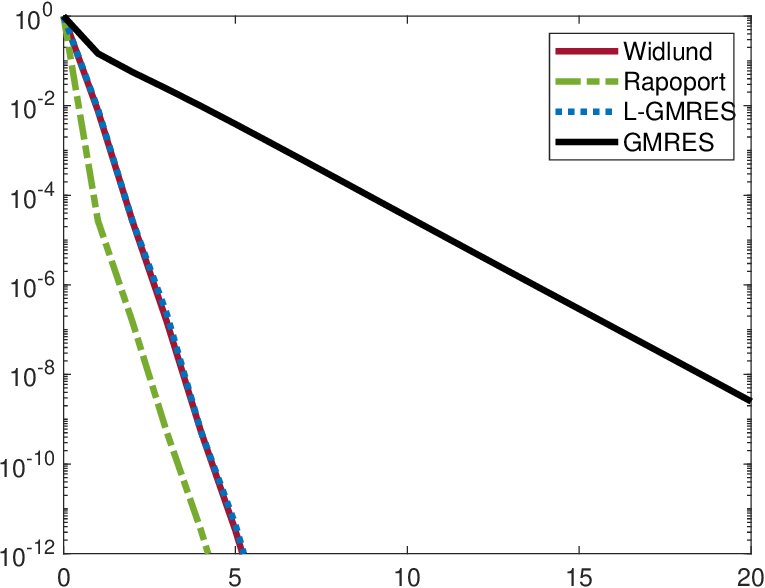}
	\includegraphics[width=0.49\textwidth]{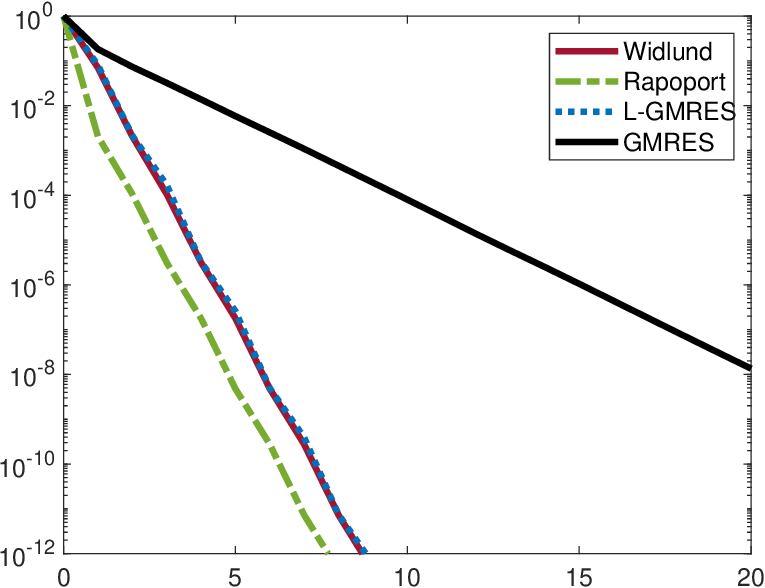}
	\caption{Multi-body system. Relative residual norms of the four methods with
    $\tau/2=10^{-4},\,10^{-3},\,10^{-2},\,10^{-1}$ (top left to bottom right) for $g=10^{6}$.}
	\label{fig:Mbs-large}
\end{figure}
%
%\begin{figure}
%	\centering
%	\includegraphics[width=0.49\textwidth]{graphs/mbs_spectrum43.eps}
%	\includegraphics[width=0.49\textwidth]{graphs/mbs_spectrum21.eps}

%		\caption{Multi-body system. The (purely imaginary) eigenvalues of $K$ for the two values of $\tau$ when $g=5 \times 10^{3}$}.
%		\label{fig:Mbs_eig}
%\end{figure}

\subsection{Stokes equation (Case 2)}\label{num:Stokes}
We consider the incompressible Stokes equation as in Example~\ref{ex:stokes}, and generate linear algebraic systems using the $Q1-Q1$ finite element approximation of the (unsteady) channel domain problem in IFISS~\cite{ifiss}. The system matrices are of the form $A=H+S$ with
$$H=\begin{bmatrix} M-\frac{\tau}2 A_H& 0\\ 0 & -\frac{\tau}2 C\end{bmatrix}\quad\mbox{and}\quad
S=\begin{bmatrix}  0& -\frac{\tau}2 B\\ \frac{\tau}2 B^* & 0\end{bmatrix},$$
where $M-\frac{\tau}2 A_H$ with $A_H={\rm diag}(A_1,A_1)$ is Hermitian positive definite. 
We use the stabilization matrix
$-\frac{\tau}{2}C=10^{-3}\frac{\tau}{2}A_1$, so that the methods of Widlund and Rapoport are applicable. We use the grid parameters 6 and 9 in IFISS, which yields matrices $B$ of the sizes $8,450\times 4,225$ and $526,338\times 263,169$, respectively. Hence, for the matrices $A$ we have $n=12,675$ and $n=789,507$, respectively. The right hand sides $f$ of the linear algebraic systems are generated by the command {\tt randn} in MATLAB.

We consider linear algebraic systems corresponding to the time steps $\tau/2=10^{-4}$ and 
$\tau/2= 10^{-3}$.
In each case we compute Cholesky decompositions of $M-\frac{\tau}2 A_H$ and $-\frac{\tau}{2}C$ using MATLAB's {\tt chol} function. For grid parameter 6, this takes about $0.01$s for both $M-\frac{\tau}2 A_H$ and $-\frac{\tau}{2}C$, and both values of $\tau/2$. For grid parameter 9, the computation of the Cholesky factors of $M-\frac{\tau}2 A_H$ and $-\frac{\tau}{2}C$ respectively takes
$7.22$s and $9.58$s for $\tau/2=10^{-4}$, as well as $6.99$s and $12.32$s for $\tau/2=10^{-3}$.
The triangular systems with the Cholesky factors are then solved in every iterative step of the methods of Widlund and Rapoport, and L-GMRES with MATLAB's backslash operator.
The time for computing the Cholesky decompositions are included in the running times of the methods of Widlund, Rapoport, and L-GMRES shown in Tables~\ref{table:Stokes-small} and~\ref{table:Stokes-large}.  Figures~\ref{fig:Stokes-small} and~\ref{fig:Stokes-large} show the corresponding convergence curves of the four iterative methods. In the computations with the iterative methods the tolerance for the relative residual norm is $10^{-12}$. We stopped the (unpreconditioned) GMRES method after 1000 steps, and we report the value of the relative residual norm attained at that point.
%The relative residual norm at that step was on the order of $10^{-5}$ and $10^{-2}$ for the systems with $n=12,675$ and $n=3,151,875$, respectively.

The observations in this example are similar to those for the multibody system in Section~\ref{num:Mbs}. The methods of Widlund, Rapoport, and L-GMRES behave similarly in terms of iterative steps, and L-GMRES takes a slightly longer time (in most cases) due to the full recurrences. The number of steps of each of these three methods increases with increasing $\tau$.
The smallest interval $i[-\lambda,\lambda]$ containing the eigenvalues of $K$ is given by
\begin{equation*}
\lambda\approx 2.158 \text{ for } \frac{\tau}{2}=10^{-4} \quad \text{ and } \quad \lambda \approx 4.336 \text{ for } \frac{\tau}{2}=10^{-3}
\end{equation*}
%$$ \mbox{,} $$
%
and hence the convergence bounds for the methods of Widlund, Rapoport, and L-GMRES again explain (to some extent) why the methods require fewer steps for smaller~$\tau$. Finally, it is noteworthy that the residual norms of the method of Widlund show rather large oscillations, while the method of Rapoport, which minimizes the $H^{-1}$-norm of the residual, converges smoothly.

\begin{table}

\begin{tabular*}{\textwidth}{@{\extracolsep{\fill}}l rrr rrr}
		& \multicolumn{3}{c}{$\tau/2=10^{-4}$} & \multicolumn{3}{c}{$\tau/2=10^{-3}$} \\		
		\hline
		Method       &  	 Time & $\norm{Rel. Res.}$ 	 &   Iter. &	 Time  &  $\norm{Rel. Res.}$  & Iter. 	 \\	
		\hline
		Widlund     &    0.095& $ 6.515\times 10^{-14}$ &  	   33  &  0.121 &$	 1.816\times 10^{-13}$ &  	   55 	 \\
		Rapoport    &   0.143	& $ 2.819\times 10^{-13} $ & 	   33 &   0.128 	 & $7.707 \times 10^{-13}  	$ &   53   \\
		L-GMRES &     0.140 & $	 5.938\times 10^{-13}  	$ &   32 &   0.193 & $	 3.021\times 10^{-13} $ & 	   50  	 \\
		GMRES    &       14.174 	 & $3.066\times 10^{-09}  $ &	  1000  &     15.129 	 & $9.208\times 10^{-13}  	 $ & 996
	\end{tabular*}

	\centering
	\caption{Stokes equation. Running times and iteration numbers for $n=12,675$.}	
	\label{table:Stokes-small}
\end{table}

\begin{table}

\begin{tabular*}{\textwidth}{@{\extracolsep{\fill}}l rrr rrr}
		& \multicolumn{3}{c}{$\tau/2=10^{-4}$} & \multicolumn{3}{c}{$\tau/2=10^{-3}$} \\		
		\hline
		Method       &  	 Time & $\norm{Rel. Res.}$ 	 &   Iter. &	 Time  &  $\norm{Rel. Res.}$  & Iter. 	 \\	
		\hline
		Widlund     & 42.852 & $	 1.476\times 10^{-13}  	 $ &  33  &   56.062 	& $ 2.264\times 10^{-13}  	 $ &  53 	 \\
		Rapoport    &    40.711	& $ 2.614\times 10^{-13} $ & 	   35  &   56.777  	& $ 4.293\times 10^{-13}  $ &	   55   \\
		L-GMRES & 43.831 	 & $6.587\times 10^{-13}  $ &	   34  & 57.757& $	 8.241\times 10^{-13}  	$ &   50 	 \\
		GMRES    &  983.591 & $	5.762\times 10^{-06}  	$ &  1000 &  980.367	 & $7.179\times 10^{-05}  	$ &  1000
	\end{tabular*}
	\centering
	\caption{Stokes equation. Running times and iteration numbers for $n= 789,507$.}	
	\label{table:Stokes-large}
\end{table}

\begin{figure}
\centering
\includegraphics[width=0.49\textwidth]{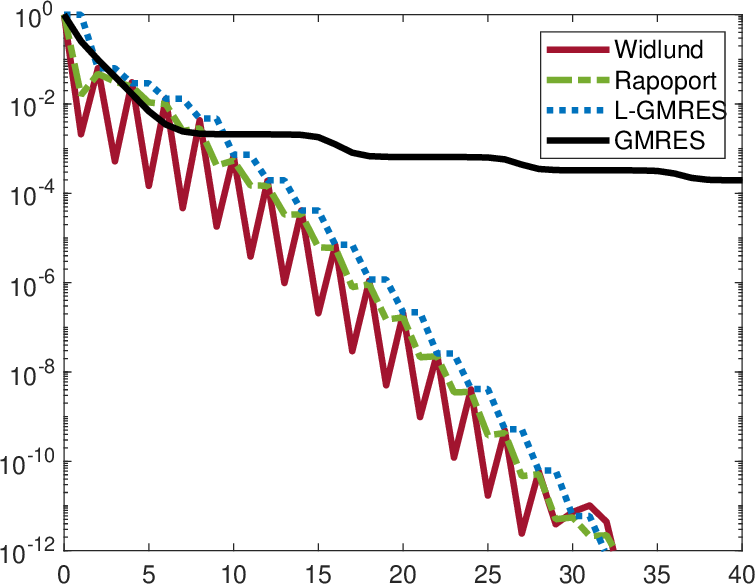}
\includegraphics[width=0.49\textwidth]{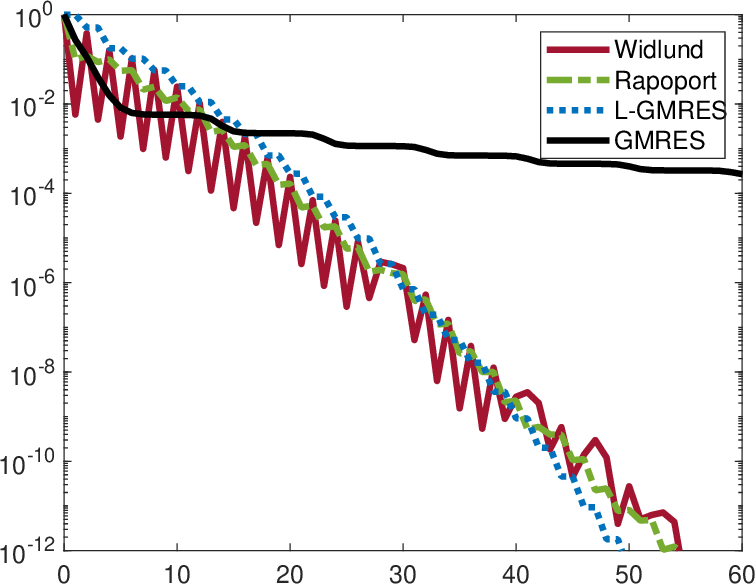}
\caption{Stokes equation. Relative residual norms of the four methods with $\tau/2=10^{-4}$ and $\tau/2=10^{-3}$ (left and right) for $n= 12,675$.}	
\label{fig:Stokes-small}
\end{figure}

\begin{figure}
	\centering
	\includegraphics[width=0.49\textwidth]{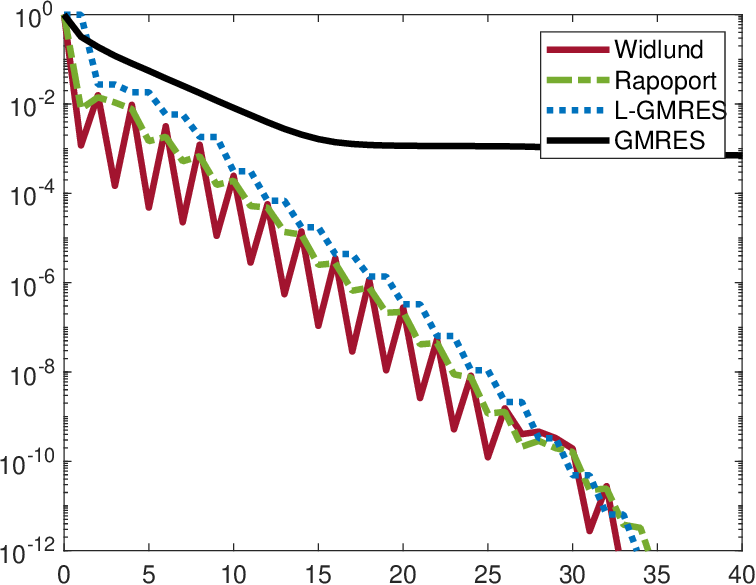}
	\includegraphics[width=0.49\textwidth]{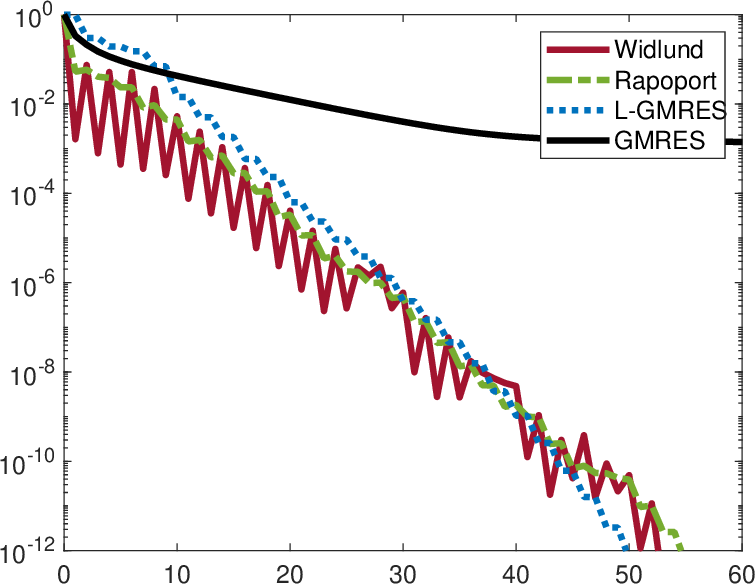}
	\caption{Stokes equation. Relative residual norms of the four methods with $\tau/2=10^{-4}$ and $\tau/2=10^{-3}$ (left and right) for $n= 789,507$.}	
	\label{fig:Stokes-large}
\end{figure}

\subsection{Linearized Navier-Stokes equation without stabilization (Case 3)}\label{num:Navier}

As a final example we consider the linearized Navier-Stokes equation without stabilization as in Example \ref{ex:stokeswostab}, and generate linear algebraic systems using the $Q2-Q1$ finite element discretization of the (unsteady) channel domain problem in IFISS~\cite{ifiss}. Now the systems are of the form
\begin{equation}\label{eqn:lin_nav}
Ax= \begin{bmatrix}M -\frac{\tau}2 (A_H+A_S)& \frac{\tau}2 B\\ -\frac{\tau}2 B^* & 0\end{bmatrix} \begin{bmatrix} v\\p\end{bmatrix} = \begin{bmatrix}f\\0\end{bmatrix}=b,
\end{equation}
where $M-\frac{\tau}{2} A_H$ is Hermitian positive definite, $A_S=-A_S^*$, and $B$ is of full rank. We use the ``grid parameter'' 6, so that $B$ is of size $8,450\times 1,089$, and hence $n=9,539$. The vector~$f$ for the right hand side is also generated by IFISS. (In this example we only use a relatively small value of~$n$, since solving a large-scale non-Hermitian Navier-Stokes problem requires additional preconditioning techniques that go beyond the purpose of this paper.)

The methods of Widlund and Rapoport are not directly applicable to \eqref{eqn:lin_nav}, since the Hermitian part $H=\begin{bmatrix}M -\frac{\tau}2 A_H & 0\\ 0 & 0\end{bmatrix}$ of $A$ is singular.  However, the system can be solved via Schur complement reduction (cf. Lemma~\ref{lem:schur}) by applying the four methods (Widlund, Rapoport, L-GMRES, and GMRES) to systems with the matrices
$$\widehat{A}_{11}=H_{11}+S_{11}=(M -\frac{\tau}2 A_H)+(-\frac{\tau}{2} A_S)\quad\mbox{and}\quad
{\cal S}_1=\frac{\tau^2}{4} B^*\widehat{A}_{11}^{-1}B,$$
which are both (non-Hermitian) positive definite. As in the Stokes equation in Section~\ref{num:Stokes}, we use $ \tau/2=10^{-4}$ and $\tau/2=10^{-3}$. We are here not interested in efficient ways to deal with the Schur complement ${\cal S}_1$, but in the performance of the iterative methods when applied to (non-Hermitian) positive definite matrices that depend on the step size parameter $\tau$. We therefore compute ${\cal S}_1$ exactly by inverting $\widehat{A}_{11}$ with MATLAB's backslash operator, which takes $4.087$s and $2.712$s for $ \tau/2=10^{-4}$ and $ \tau/2=10^{-3}$, respectively. In order to apply the methods of Widlund, Rapoport, and L-GMRES we compute incomplete Cholesky decompositions of $H_{11}$ and of the Hermitian part of ${\cal S}_1$ using MATLAB's {\tt ichol} function with drop tolerance $10^{-9}$. %(as in Section~\ref{num:Stokes}).

In Table~\ref{table:nav} we show the total time and number of iterative steps required by the different methods for solving the two systems with $\widehat{A}_{11}$ and ${\cal S}_1$, as well as the relative residual norm of the approximate solution of $Ax=b$ obtained in this way. The table also shows the corresponding values of (unpreconditioned) GMRES applied to the system $Ax=b$, denoted by GMRES($A$). Here we stopped the iteration after 500 steps, and we report the value of the relative residual norm attained at that point.

Figure~\ref{fig:nav-resid} shows the convergence curves of the four methods applied to the systems with $\widehat{A}_{11}$ and ${\cal S}_1$. The behavior is similar to what we have observed in Sections~\ref{num:Mbs} and~\ref{num:Stokes}, with the exception that in this example the method of Rapoport performs slightly better than the method of Widlund and L-GMRES. Again, these three methods outperform L-GMRES, and also GMRES($A$) is not competitive. In this example the (purely imaginary) eigenvalues of the matrix $K$ corresponding to $\widehat{A}_{11}$ are contained in the interval $i[-\lambda,\lambda]$ with
$$\mbox{$\lambda\approx 2.3516 \times 10^{-6}$ for $ \tau/2=10^{-4}$}\quad\mbox{and}\quad
\mbox{$\lambda\approx 1.0083 \times 10^{-4}$ for $\tau/2=10^{-3}$,}$$
and for the matrix $K$ corresponding to the Schur complement ${\cal S}_1$ they are contained in $i[-\lambda,\lambda]$ with
$$\mbox{$\lambda\approx 9.9225 \times 10^{-7}$ for $\tau/2=10^{-4}$}
\quad\mbox{and}\quad\mbox{$\lambda\approx 7.4483 \times 10^{-5} $ for $\tau/2=10^{-3}$.}$$
Using these very small values of $\lambda$ in the bounds \eqref{eqn:CGW-bound}, \eqref{eqn:Rap-bd}, and \eqref{eqn:L-GMRES-bd} explains the fast convergence of the methods of Widlund and Rapoport, and L-GMRES for the systems with $\widehat{A}_{11}$ and ${\cal S}_1$.

\begin{table}
\begin{tabular*}{\textwidth}{@{\extracolsep{\fill}}l rrr rrr}
	& \multicolumn{3}{c}{$\tau/2=10^{-4}$} & \multicolumn{3}{c}{$\tau/2=10^{-3}$} \\				
	\hline
	Method       &  	 Time & Iter. & $\norm{Rel. Res.}$ 	 &  	 Time  & Iter. & $\norm{Rel. Res.}$   	 \\
		\hline
		Widlund        &   0.388    &      8 & $	 8.802 \times 10^{-13} $  &  0.075    &  		   6 & $ 2.881 \times 10^{-13} $  \\
		Rapoport        &    0.366 	   &  	   3   & $ 8.758 \times 10^{-13} $   & 0.067    &  	     4 &$ 2.875 \times 10^{-13} $	  \\
		L-GMRES     &    	  0.464    &       9 & $	 2.114 \times 10^{-14} $	 &	 0.092       &  	  8 &  	$ 1.165 \times 10^{-13} $    \\
		GMRES     &  4.615 	    	    &   154 &   $9.191 \times 10^{-13} $    &   2.931      &    119 &   	$ 7.429 \times 10^{-13} $  \\
		\hline
		GMRES($A$)  &  3.657 	      &  	  500  &  $ 4.924 \times 10^{-05} $  &  3.402      &    	  500 & $ 1.561 \times 10^{-05} $
	\end{tabular*}
	\centering
	\caption{Linearized Navier-Stokes equation without stabilization. Running times, number of iterations, and relative residual norms at the final step for $\tau/2=10^{-4}$ and $\tau/2=10^{-3}$.}	
	\label{table:nav}
\end{table}

\begin{figure}
	\centering
	\includegraphics[width=0.49\textwidth]{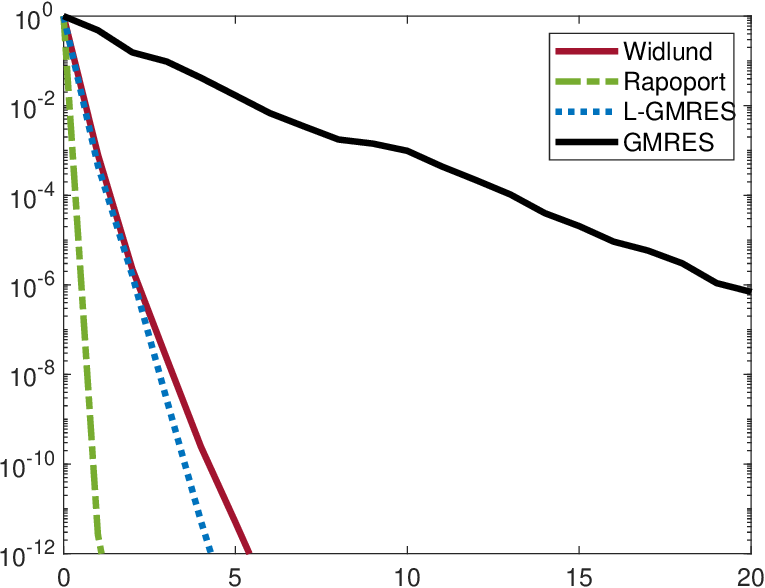}
	\includegraphics[width=0.49\textwidth]{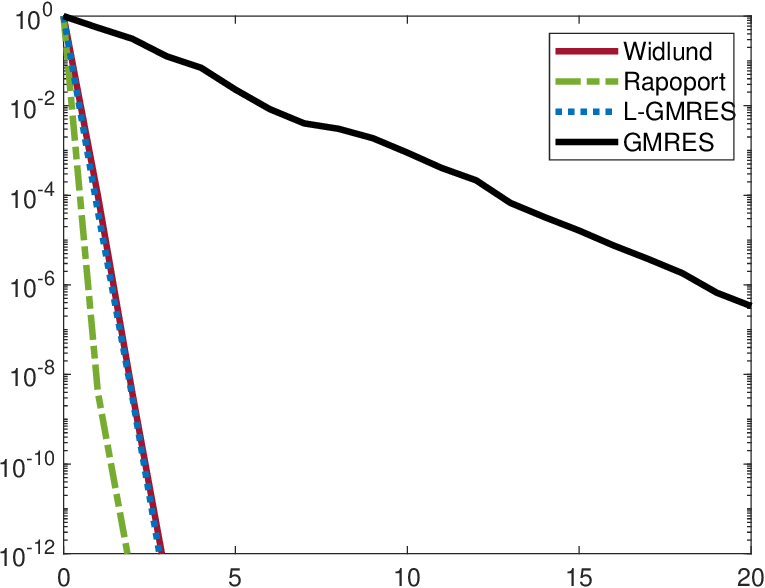}\\[2ex]
	\includegraphics[width=0.49\textwidth]{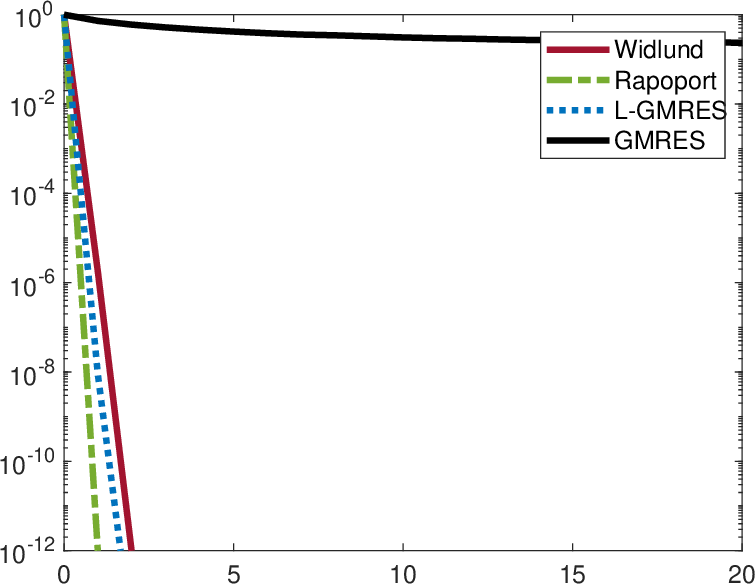}
	\includegraphics[width=0.49\textwidth]{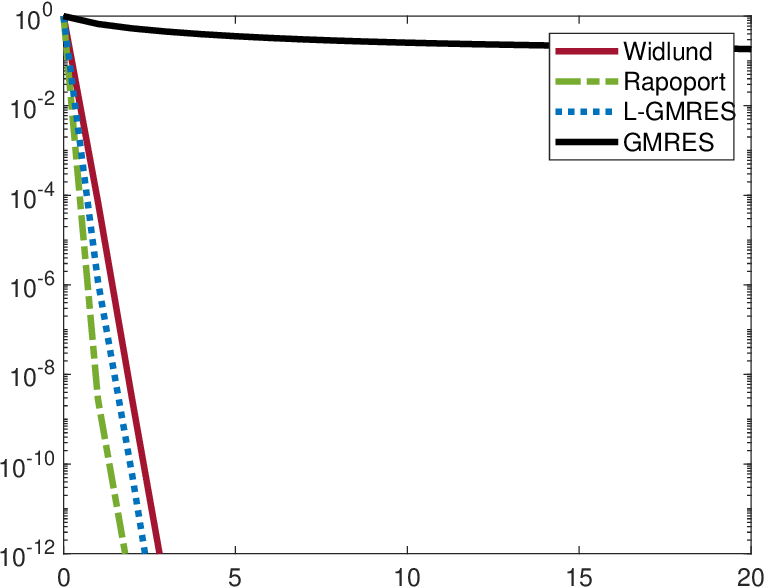}
	\caption{Linearized Navier-Stokes equation without stabilization. Relative residual norms of the four methods applied to the systems with $\widehat{A}_{11}$ (top row) and ${\cal S}_1$ (bottom row) with $ \tau/2=10^{-4}$ and $ \tau/2=10^{-3}$ (left and right).}
	\label{fig:nav-resid}
\end{figure}

\section{Concluding remarks}\label{sec:conclusion}
	
Dissipative Hamiltonian differential-algebraic equation (dHDAE) systems occur in a wide range of energy-based modeling applications, including thermodynamics, electromagnetics, and fluid mechanics. These systems can be classified using a staircase from, which reveals their differentiation index (either zero, one, or two). We have given a systematic overview of the three different cases. An important common feature is that the matrices arising in the (space and time) discretization of dHDAE systems split naturally into $A=H+S$, where the Hermitian part $H$ is positive definite or positive semidefinite. This feature can be exploited in the numerical solution of the corresponding linear algebraic systems.

A focus of our work has been the case of positive definite $H$, which allows the application of the Krylov subspace methods of Widlund and Rapoport. These methods were derived in the late 1970s, but have rarely been analyzed or even cited in the literature so far. We have summarized their main mathematical properties, and we have presented extensive numerical experiments with linear algebraic systems from different dHDAE application problems. In these experiments the three-term recurrence methods of Widlund and Rapoport have consistently outperformed L-GMRES and (unpreconditioned) GMRES. The behavior we have observed is consistent with the convergence bounds for the methods of Widlund and Rapoport, which indicate a fast convergence for the systems $(I+K)x=b$ when $K=H^{-1}S$ is ``small''. In time discretizations of dHDAE systems this important feature is virtually ``built in'', since the skew-Hermitian part of the dHDAE is being multiplied by the (usually) small time step parameter $\tau$.

Overall, we have therefore presented a holistic approach combining energy-based modeling using dHDAE systems, their structure-preserving discretization, and finally a structure-adapted linear algebraic computation.

The case of a positive semidefinite $H$ is challenging. We have shown in Lemma~\ref{lem:schur} that one can identify the ``singular part'' of $A=H+S$ via a unitary transformation, but this tool in not practical in large scale applications. However, in the mathematical modeling the block structure of the dHDAE frequently exposes the ``singular part'', and no further transformation is necessary. In such cases, we can apply the methods of Widlund and Rapoport to the ``positive definite part'' of the problem, and the ``singular part'' must be solved by other means. A closer analysis of the positive semidefinite case is a subject of future work.

\section*{Acknowledgments}
We thank the anonymous referees as well as Andreas Frommer and Karsten Kahl for helpful suggestions that have improved our presentation.
%Candan G\"ud\"uc\"u was supported by Deutsche Forschungsgemeinschaft (DFG) through CRCTRR 154, Project B03. Volker Mehrmann was supported by Deutsche Forschungsgemeinschaft (DFG) through
%CRCTRR 154, Project B03, and priority program SPP 1984.

\bibliographystyle{siam}
\bibliography{glms21}

\end{document}